   \newtheorem{theorem}[subsubsection]{Theorem}
      \newtheorem*{theorem*}{Theorem}
   \newtheorem{proposition}[subsubsection]{Proposition}
   \newtheorem{lemma}[subsubsection]{Lemma}
   \newtheorem*{claim}{Claim}
   \newtheorem*{observation}{Observation}
   \newtheorem{problem}[subsubsection]{Problem}
   \newtheorem*{conjecture*}{Conjecture}
\theoremstyle{definition}
   \newtheorem{exercise}[subsubsection]{Exercise}       \newtheorem*{exercise*}{Exercise}
   \newtheorem{example}[subsubsection]{Example}
   \newtheorem*{example*}{Example}
   \newtheorem{definition}[subsubsection]{Definition}
   \newtheorem*{definition*}{Definition}
   \newtheorem{remark}[subsubsection]{Remark}
\newcommand{\QQ}{{\mathbb{Q}}}
\newcommand{\ZZ}{{\mathbb{Z}}}
\newcommand{\GG}{{\mathbb{G}}}
\renewcommand{\AA}{{\mathbb{A}}}
\newcommand{\cA}{{\mathcal A}}
\newcommand{\cC}{{\mathcal C}}
\renewcommand{\cD}{{\mathcal D}}
\newcommand{\cF}{{\mathcal F}}
\newcommand{\cG}{{\mathcal G}}
\renewcommand{\cH}{{\mathcal H}}
\newcommand{\cI}{{\mathcal I}}
\newcommand{\cO}{{\mathcal O}}
\newcommand{\oJ}{{\overline{J}}}
\def\<{\langle}
\def\>{\rangle}
\newcommand{\Spec}{\operatorname{Spec}}
\newcommand{\Span}{\operatorname{Span}}
\newcommand{\Proj}{\operatorname{Proj}}
\newcommand{\ox}{{\overline{x}}}
\def\:{{\colon}}
\def\.{{,\dots,}}
\def\dim{{\rm dim}}
\def\inv{{\rm inv}}
\newcommand{\double}{\genfrac..{0pt}1
{\raise -1pt\hbox{$\scriptstyle\longrightarrow$}}{\raise 3pt\hbox
{$\scriptstyle\longrightarrow$}}}
\renewcommand{\setminus}{\smallsetminus}
\def\int{{\rm int}}
\def\tototi{\mathbin{\mathop{\otimes}\limits^{\raise-1pt\hbox
{$\scriptscriptstyle {\rm L}$}}}}
\def\indlim{\mathop{\vrule width0pt height7pt depth
4pt\smash{\lim\limits_{\raise 1pt\hbox to 14.5pt
{\rightarrowfill}}}}}
\def\projlim{\mathop{\vrule width0pt height7pt depth
4pt\smash{\lim\limits_{\raise 1pt\hbox to 14.5pt
{\leftarrowfill}}}}}
\newcommand\displaceamount{3pt}
\newcommand{\doubledown}{\ar@<\displaceamount>[d]\ar@<-\displaceamount>[d]}
\newcommand{\doubleup}{\ar@<\displaceamount>[u]\ar@<-\displaceamount>[u]}
\newcommand{\doubleright}{\ar@<\displaceamount>[r]\ar@<-\displaceamount>[r]}
\newcommand{\ord}{{\operatorname{ord}}}
\def\tilR{{\widetilde R}}
\begin{document}
\title[Resolution and foliations]{Resolution of singularities for the dynamical mathematician}

\author[Abramovich]{Dan Abramovich}
\address{Department of Mathematics, Box 1917, Brown University,
Providence, RI, 02912, U.S.A}
\email{dan\_abramovich@brown.edu}

\date{\today}

\thanks{This is a report on research  supported by BSF grants 2018193, 2022230, ERC Consolidator Grant 770922 - BirNonArchGeom, NSF grants
DMS-2100548, DMS-2401358, the Plan d'Investissements France 2030, IDEX UP ANR-18-IDEX-0001, and Simon foundation grants MPS-SFM-00006274, MPS-TSM-00008103. I thank my collaborators Bellotto da Silva, Temkin and W{\l}odarczyk for this exciting project and their part in securing the above support. I thank IHES, the Hebrew University, CIRM and BNF   for their hospitality during 2024-2025. I thank the participants of the CIRM workshop \emph{Foliations, birational geometry and applications}, 3-7 February, 2025, where many stimulating discussions occurred.}

\subjclass[2020]{ 14E15, 32S65, 32S45, 14A20, 14A21} 
\begin{abstract}
I begin by  explaining to non-specialists why resolution of singularities in characteristic 0 works. Then I go into some ideas telling how it actually works. I finish with a brief discussion of related results on foliations.
\end{abstract}
\maketitle

\setcounter{tocdepth}{1}
\tableofcontents

\section{Introduction}

These are lecture notes for a minicourse by the same title delivered at the CIRM conference \emph{Foliations, birational geometry and applications,}
3-7 February, 2025

We work in characteristic 0.  The main result is due to Hironaka: resolution of singularities in characteristic 0 exists.

I report on work with Andr\'e Belotto da Silva, Michael Temkin, and Jaros{\l}aw W{\l}odarczyk; any claim to originality is joint with them and appears in the paper \cite{ABTW-foliated}. There is also related work with Ming Hao Quek and Bernd Schober which influenced my presentation but is not explicitly described here.

There is a whole lot of other work on the subject, to some of which I will hopefully  refer at appropriate junctures. Some reference are  \cite{Hironaka,Giraud-etude,Giraud,Villamayor,Bierstone-Milman,Encinas-Hauser,Encinas-Villamayor,Wlodarczyk,Kollar,ATW-weighted}.

\section{Blowing up}

I describe an approach to blowing up that originates with  Rees's $\mathfrak{a}$-transform of a ring $A$ with ideal $\mathfrak a$. It was rediscovered many times, each time with new angles and insights:  Fulton and MacPherson use it under the name \emph{degeneration to the normal cone}, a key construction in intersection theory, a term I stick by \cite{Fulton}. Miles Reid showed its relevance for GIT and the minimal model program, bringing in flips to the picture \cite{Reid}.  For Cox it is a case of the Cox construction \cite{Cox}. W{\l}odarczyk used it first for factorization of birational maps \cite{W-cobordism} and more recently for resolution \cite{Wlodarczyk-cobordant}. Quek and Rydh used it specifically to describe weighted blowups \cite{Quek-Rydh}.

\subsection{Classical blowup in concrete terms --- the cox construction}

Let us blow up the vanishing locus of the ideal $\oJ = (x_1,\ldots,x_k)$ on the variety $Y = \Spec k[x_1,\ldots,x_n]$, the affine space.

Define the \emph{extended Rees algebra} $$\tilR\quad  :=\quad  \cO_Y[s,x_1',\ldots,x_k']\  /\  (x_1-sx_1',\ \ldots,\  x_k-sx_k'),$$ and denote $$B := \Spec_Y \tilR.$$ 

The variable $s$ is the \emph{exceptional variable}, and it has global meaning: the $Y$-variaty $B$ is the \emph{deformation to the normal cone} of $\oJ$, and $s$ is the deformation parameter.  The variables $x_i'$ are the \emph{transformed variables,} associated to the generators $x_i$ of $\oJ$. These depend on the choice of generators.

We denote by $$V:=V(x_1',\ldots,x_k')$$ the \emph{vertex} of $B$.

The algebra $\tilR$ is graded, with $x_i'$ in degree 1 and $s$ in degree $-1$. This corresponds to the natural action of the multiplicative group $\GG_m$ on $B$ relative to $Y$, given by 
$$t\cdot (s,x_1',\ldots,x_k') = (t^{-1} s, \ tx_1',\ldots, tx_k').$$

This action leaves the vertex invariant, providing an action on $B_+ := B \smallsetminus V$. The action on $B_+$ is free, since if $x_i' \neq 0$ and $t\neq 1$ we have $tx_i'\neq x_i'$.

We can therefore consider the quotient 
$$Bl_\oJ(Y) := B_+ / \GG_m,$$
the \emph{blowup} of $Y$ along $\oJ$.  This is in fact a variety, as we recall below.

\subsection{Classical blowup in concrete terms --- old charts}

The construction above, using extended Rees algebras, deformation to the normal cone, and $\GG_m$-actions, is periodically rediscovered every few years, each time shedding new light. I contend that it is the \emph{right} way to introduce blowups, albeit requiring the sophistication of group actions.

Since we did learn another presentation of a blowup in a basic course, let us see that it is the same thing, and at the same time be convinced that $Bl_\oJ(Y)$ is a variety.

The variety $B_+$ is covered by the affine opens $B_i$ where $x_i'\neq 0$. For instance

$$B_1 = \Spec_Y \cO_Y[s,x_1',\ldots,x_k',x_1'^{-1}] / (x_1-sx_1',\ldots, x_k-sx_k').$$

The action of $\GG_m$ on $B_1$ admits a \emph{slice} $Y_1$ where $x_1' = 1$, on which $s=x_1$ and $x_j = x_1x_j'$.  In other words $B_i/ \GG_m \simeq Y_i$ is an affine variety, and the transition from $Y$ to $Y_i$, and between the different charts $Y_i$, is given by the equations we learned in a standard algebraic geometry course. 

 \begin{exercise} Show directly that the invariants of the action are given by $x_i$ and $y_i = x_i'/x_1'$, so that $$B_1 = \Spec_Y\cO_Y[y_2,\ldots,y_k] / (x_2-x_1y_2,\ldots, x_k-x_1y_k)[x_1',x_1'^{-1}] = \Spec_{Y_1} \cO_{Y_1}[x_1',x_1'^{-1}] $$ and the blowup is given by the standard affine charts $$Y_1=\Spec_Y \cO_Y[y_2,\ldots,y_k] / (x_2-x_1y_2,\ldots, x_k-x_1y_k).$$
\end{exercise}
\subsection{Classical blowup in concrete terms --- global interpretation}
If $Y$ is general and $\oJ$ is an ideal, then $$\tilR = \bigoplus_{a\in \ZZ} R_a,$$ where $R_a = \oJ^a$ for $a\geq 0$ and $\cO_Y$ for $a\leq 0$. In other words, just as the blowup is given as the $\Proj$ of the standard Rees algebra of $\oJ$, we have given it here a variant quotient construction in terms of the \emph{extended} Rees algebra. 

The element $s$ corresponds to the generator $1$ of the ideal $\oJ_{-1} = \cO_Y$ in degree $-1$.

Rees actually mainly introduced the extended Rees algebra, and used it to great advantage to study ideals. This continued through history --- many of the deeper properties of blowups are studied through the deformation to the normal cone.

\subsection{Weighted blowup  --- the cox construction}\label{Sec:weighted} I follow Quek and Rydh \cite{Quek-Rydh} and W{\l}odarczyk \cite{Wlodarczyk-cobordant}.

Let us now blow up the object  $\oJ = (x_1^{1/w_1},\ldots,x_k^{1/w_k})$ on the same variety $Y = \Spec k[x_1,\ldots,x_n]$. I will say what $\oJ$ is --- it is no longer an ideal --- but first let us see it in action.

The extended Rees algebra is now $$\tilR \quad := \quad \cO_Y[s,x_1',\ldots,x_k'] \ /\  (x_1-s^{w_1}x_1',\ \ldots,\  x_k-s^{w_k}x_k'),$$ note the powers of $s$ in the ideal, and denote $$B := \Spec_Y \tilR.$$ 

The $Y$-variaty $B$ is now the deformation of $Y$ to the  \emph{weighted} normal cone of $\oJ$.  We still have \emph{transformed variables} $x_i'$ associated to the generators $x_i$ of $\oJ$. 

We still denote by $V:=V(x_1',\ldots,x_k')$ the \emph{vertex} of $B$.

The algebra $\tilR$ is graded, with $s$ in degree $-1$, but now $x_i'$ have degree $w_i$. This corresponds to the natural action of the multiplicative group $\GG_m$ on $B$ relative to $Y$, given by 
$$t\cdot (s,x_1',\ldots,x_k') = (t^{-1} s, \ t^{w_1}x_1',\ldots, t^{w_k}x_k').$$

This action still leaves the vertex invariant, providing an action on $B_+ := B \smallsetminus V$. 

The action on $B_+$ is no longer free, since if $x_i' \neq 0$ the group $\mu_{w_i}$ fixes $x_i'$ and, when $w_i >1$, always has fixed points where the other $x_j'$ vanish.

It is therefore natural to consider the quotient \emph{stack}
$$Bl_\oJ(Y) := [B_+ / \GG_m],$$
the stack-theoretic \emph{weighted blowup} of $Y$ along $\oJ$.  This is a Deligne--Mumford stack.\footnote{In positive characteristic it is only a tame algebraic stack.}

\subsection{What is $\oJ$? What is $\tilR$?} 

For the discussion here, which is mostly local, we can view $\oJ$ as a \emph{monomial valuation} $v_\oJ$, where $v_\oJ(x_i) = w_i$. A global interpretation is that $\oJ$ is a $\QQ$-ideal of a particularly nice shape.

 A $\QQ$-ideal is a notion that in this discussion is equivalent to Hironaka's \emph{idealistic exponent} \cite{Hironaka-idealistic}. A $\QQ$-ideal is an object of the form $\cI^{1/b}$, an equivalence class given by ideal $\cI$ and positive integer $b$, where $\cI_1^{1/b_1}$ is equivalent to $\cI_2^{1/b_2}$ if $\cI_1^{b_2}$ and $\cI_2^{b_1}$ have the same integral closure. One can add $\QQ$-ideals, and $\oJ = (x_1)^{1/w_1} + \cdots + (x_k)^{1/w_k}$.

The extended Rees algebra $\tilR = \bigoplus_{a\in \ZZ} R_a$ is determined by the valuation as follows:

$$R_a \ =\  \{f\in \cO_Y\ :\  v_\oJ(f)\geq a\}.$$

In these terms $B = \Spec_Y\tilR$, the vertex is $V = V(\oplus_{a>0} R_a)$, and $B_+ = B \setminus V$. 

\subsection{Deformation to the normal cone --- the big picture}

First note that  we have a globally defined morphism $B \to \Spec k[s]$. 

Where $s=1$ we have $x_i = x_i'$ so the fiber is just $Y$. A similar picture holds when we fix any $s\neq 0$.

Over $s=0$ we have that $x_i=0$, so the fiber lies over $Z:= V(\oJ) \subset Y$. Over a point of $Z$ the variables $x_i'$ are free to vary arbitrarily, in other words the fiber is an affine-space bundle over $Z$ --- that's the weighted normal cone.\footnote{In the truly weighted case the fiber over $s=0$  is not necessarily a vector bundle, but the action of $\GG_m$ provides enough structure to work with it.}

The action of $\GG_m$ is a product action over $V \simeq Z \times \AA^1$, pulling towards $s=0$. The action on the normal cone expands $x_i'$ with weight $w_i$. It is instructive to draw the picture (as I do in my lectures), and the picture remains important later on. 
\begin{exercise}\label{Ex:picture} Draw the picture! \label{Ex:picture}
\end{exercise}
Don't proceed any further until you either solve the exercise or peek in its solution, see Section \ref{Sol:picture}. Ignore the green $X, BX$ and $NX$ as they arrive later in the game.
  
\section{Resolution and invariants}

\subsection{Resolution}
We are interested in \emph{embedded} resolution of singularities: we have a singular, say irreducible, subvariety $X \subset Y$ where $Y$ is smooth. The procedure we describe is canonical enough so that one can resolve any variety using local embeddings, but I will not discuss this reduction.

An embedded \emph{resolution of singularities} of $X\subset Y$ is a modification $\pi:Y' \to Y$ where 
\begin{itemize} 
\item $Y'$ is still smooth, 
\item $\pi$ is an isomorphism over the generic point of $X$, and 
\item the strict transform $X'$ of $X$ is smooth.
\end{itemize}

One important note about my exposition: \emph{I allow $Y'$, and hence $X'$, to be a stack.} I hope you will allow me this transgression. I will just note that, if you do want to arrive at a resolution with a variety at the end, the remaining problem is much easier --- it has to do with resolving abelian quotient singularities, for which toric methods apply. A more sophisticated approach uses \emph{Bergh's destackifiaction algorithm.} 

\subsection{Invariants}

\begin{itemize} 
\item 
By a \emph{singularity invariant} we mean a rule\footnote{Really a functor, see below.} that, given $X \subset Y$ gives a function $\inv_X: |X| \to \Gamma$, where
\begin{itemize}
\item $\Gamma$ is a well-ordered set,
\item $\inv_X$ is upper-semicontinuous, and
\item $\inv_X(p) = \min\Gamma\quad  \Leftrightarrow \quad p\in X$ is nonsingular.
\end{itemize}
\item
The invariant is \emph{functorial} if whenever $\phi: Y_1 \to Y$ is smooth, with $X_1 = \phi^{-1} X$, we have
$$\inv_{X_1}(p_1) = \inv_X(\phi(p_1)).$$
\item 
The invariant is \emph{smooth} if the maximal locus 
$$\{p\in X \ : \ \inv_X(p) \text{ is maximal }\}$$
in $X$ is smooth.
\end{itemize}
We note that a maximal value exists since upper-semicontinuity implies that $\inv$ is constructible, so by noetherianity of $X$ a maximum exists. 

Also note that upper-semicontinuity implies that the maximal locus is closed.

\section{Resolution in characteristic 0 --- why does it work?}
I  now explain \emph{why} resolution of singularities in characteristic 0 works, following \cite{Wlodarczyk-cobordant}.
\subsection{Statement} For a center $\oJ$ on $Y$ we defined in Section \ref{Sec:weighted} the deformation to the weighted normal cone $B$. 

If $X\subset Y$ is a subvariety, we denote by $BX\subset B$ the proper transform of $X$: it is the closure in $B$ of $X \times \GG_m$ which sits inside $\{s \neq 0\} = Y \times \GG_m \subset B$. Its intersection with $\{s=0\}$ is the normal cone $N_{Z\subset X}$ of $Z$ in $X$ (that is, the weighted normal cone). 

Similarly, the \emph{weighted blowup} of $X$ is $Bl_{\oJ}(X) := [(BX\setminus V)/\GG_m] \subset Bl_{\oJ}(Y).$ It is the proper transform of $X$ in $Bl_\oJ(Y)$. The picture can be viewed in your solution of Exercise \ref{Ex:picture} or in Section \ref{Sol:picture} (and now the green items $X, BX, NX$ should make sense).

Now assume we have a smooth, functorial singularity invariant $\inv$.
\begin{lemma}\label{Lem:criterion}
 \begin{enumerate} 
\item Fix $X \subset Y$ and assume we have a center $$\oJ = (x_1^{1/w_1},\ldots, x_k^{1/w_k})$$ on $Y$ which satisfies
\begin{enumerate}
\item $V(\oJ)$ is the maximal locus of $\inv_X$ on $X$, and
\item $\max \inv_X = \max \inv_{BX}$.
\end{enumerate}
Then $$\max\inv_{Bl_\oJ(X)} <  \max \inv_X.$$
\item Assume a center $\oJ$ satisfying these conditions exists for every resolution situation $X \subset Y$. Then resolution of singularities holds.
\end{enumerate}
\end{lemma}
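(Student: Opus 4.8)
Part (1) is the local/infinitesimal statement, and I would prove it by tracking the invariant through the deformation $B$ and its quotient. The key structural fact is that $B \to \Spec k[s]$ has general fiber $Y$ (where $s \neq 0$), so for any $s_0 \neq 0$ the smooth inclusion $Y \times \{s_0\} \hookrightarrow B$ together with functoriality of $\inv$ gives $\max\inv_{BX} \geq \max\inv_X$ automatically; condition (b) is the assertion that the reverse inequality also holds, i.e. that passing to the (weighted) normal cone does not increase the invariant. Granting (b), I would now compare $\inv_{BX}$ to $\inv_{Bl_\oJ(X)}$. By construction $Bl_\oJ(X) = [(BX \setminus V)/\gm]$, and the quotient map $BX \setminus V \to Bl_\oJ(X)$ is smooth (it is a $\gm$-torsor in the free case, and a smooth gerbe-like map in the weighted case since the action is free away from the finite stabilizers, which get absorbed into the stack structure). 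Functoriality of $\inv$ then gives $\max\inv_{Bl_\oJ(X)} = \max\inv_{BX\setminus V}$. So everything reduces to showing
\[
\max\inv_{BX\setminus V} \;<\; \max\inv_{BX}.
\]
Here is where conditions (a) and (b) do their work together: the locus in $BX$ where $\inv_{BX}$ attains its maximum $m := \max\inv_X$ must, by functoriality applied over $s\neq 0$, contain the locus $\{p \in X : \inv_X(p) = m\} \times \gm$, whose closure meets $\{s=0\}$ in the normal cone $N_{Z\subset X}$; and by (a) this locus $Z = V(\oJ)$ is exactly where the maximum is achieved on $X$. One then argues that the full maximal locus of $\inv_{BX}$ is contained in the vertex $V$ — intuitively, the $\gm$-action ``expands'' transversally to $Z$ and pulls the maximal locus onto $V$, where it is removed in forming $BX \setminus V$. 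Thus the maximal value strictly drops on $Bl_\oJ(X)$, which is Part (1). The subtle point worth isolating is that one must also rule out that $\inv_{BX\setminus V}$ picks up the \emph{same} maximal value $m$ somewhere away from the image of $Z\times\gm$; this is prevented precisely because the general fiber is $Y$ and its maximal locus sits over $Z$, while the special fiber contributes only the cone $N_{Z\subset X}$ which lies inside $V$ once we remember it is the \emph{weighted} normal cone (so its ``non-vertex'' part carries strictly smaller invariant by functoriality applied to the $\gm$-action relating interior points of the cone to points with $s\neq 0$).

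Part (2) is the global induction. I would argue as follows. Fix a resolution situation $X \subset Y$. If $X$ is already smooth there is nothing to do (the maximal locus of $\inv_X$ is empty of singular points, and $\inv_X \equiv \min\Gamma$). Otherwise, by hypothesis there is a center $\oJ$ satisfying (a) and (b); form $Y' := Bl_\oJ(Y)$ and let $X' := Bl_\oJ(X)$ be the proper transform. By Part (1), $\max\inv_{X'} < \max\inv_X$. Since $\Gamma$ is well-ordered, this process terminates after finitely many steps: we obtain a sequence $Y = Y_0 \leftarrow Y_1 \leftarrow \cdots \leftarrow Y_N$ with $\max\inv_{X_N} = \min\Gamma$, i.e. $X_N$ is everywhere nonsingular. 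Each $Y_i$ is smooth because weighted blowups of smooth ambient spaces along admissible centers are smooth (as Deligne–Mumford stacks), and each blowup is an isomorphism over the generic point of $X_i$ because the center $V(\oJ)$, being the maximal locus of $\inv_{X_i}$ on the singular $X_i$, is a proper closed subset not containing the generic point. Hence $\pi : Y_N \to Y$ is an embedded resolution of singularities. (Smoothness and functoriality of $\inv$ are used throughout: smoothness guarantees the center $V(\oJ)$ in condition (a) can even be chosen; functoriality is what lets us run the same invariant coherently on the blown-up spaces and is implicitly what makes the whole construction canonical.)

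**Where the real difficulty lies.** The genuinely hard content is not in this lemma — it is in \emph{producing} a smooth functorial invariant $\inv$ and a center $\oJ$ satisfying (a) and (b) for every resolution situation, which is the subject of the rest of the theory. Within the proof of the lemma itself, the one step demanding care is the inequality $\max\inv_{BX\setminus V} < \max\inv_{BX}$ in Part~(1): one must genuinely use both the geometry of the $\gm$-action on the deformation-to-the-normal-cone space $B$ (the fact that the action ``contracts to $V$'' and that $V$ is exactly what gets excised) and the functoriality of $\inv$ along the smooth maps relating general fibers, special fibers, and the quotient. Everything else is bookkeeping with well-ordered sets and the definitions recalled in Sections~\ref{Sec:weighted} and the paragraph preceding the statement.
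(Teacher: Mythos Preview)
Your overall architecture matches the paper's: reduce Part~(1) to showing $\max\inv_{BX\setminus V} < \max\inv_{BX}$ via functoriality along the smooth quotient $BX\setminus V \to Bl_\oJ(X)$, and deduce Part~(2) by well-orderedness of $\Gamma$. Part~(2) is fine.

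There is, however, a genuine gap in your Part~(1), precisely at the step you flag as ``the one step demanding care.'' Two issues:

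\emph{First, you never use smoothness of the invariant.} The paper's argument hinges on it: since $\inv$ is a smooth invariant, the maximal locus $W\subset BX$ is smooth. Over $\{s\neq 0\}\simeq X\times\GG_m$ the maximal locus is $Z\times\GG_m$ by (a) and functoriality, so $V=\overline{Z\times\GG_m}$ is an irreducible component of $W$; smoothness of $W$ then forces $V$ to be a \emph{connected component} of $W$. Without this, you have no mechanism to separate $V$ from possible extra components of $W$ lying in the cone. Your remark that ``smoothness guarantees the center $V(\oJ)$ in condition (a) can even be chosen'' misplaces the role of smoothness: here it is used on $BX$, not on $X$.

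\emph{Second, your $\GG_m$ argument is garbled.} You write that functoriality under the $\GG_m$-action ``relat[es] interior points of the cone to points with $s\neq 0$.'' It does not: the fiber $\{s=0\}$ is $\GG_m$-invariant, so orbits of cone points stay in the cone. The correct statement, used in the paper, is that for $b\in N_{Z\subset X}\setminus V$ the orbit closure $\overline{\GG_m\cdot b}$ lies in $W$ (functoriality plus closedness) and \emph{limits into $V$} as $t\to\infty$. That produces a point of $W\setminus V$ in the same connected component as $V$, contradicting the conclusion of the previous paragraph. This orbit-closure-meets-$V$ step together with the smoothness step is the whole content; your sketch gestures at the first but states it incorrectly, and omits the second entirely.
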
  

\begin{exercise} Try to prove the lemma without looking below.\end{exercise}

\subsection{Proof of part 2}
 We have $\max\inv_{Bl_\oJ(X)} <  \max \inv_X$, and by assumption $X' := Bl_\oJ(X)$ has its own center and blowup with the same property, giving rise to a sequence of birational morphisms $X^{(k)} \to \cdots \to X' \to X$ where $\max\inv_{X^{(k)}} <  \max \inv_{X^{(k-1)}}.$ Since $\Gamma$ is well ordered, this sequence of maximal values terminates, and the process only terminates if $X^{(k)}$ is smooth.

\subsection{The maximal locus} Before we  show part (1), a claim. You want to look at the big picture of $B$ you drew in Exercise \ref{Ex:picture}.

\begin{claim} The maximal locus $W$ of $\inv_{BX}$ is $V$. 
\end{claim}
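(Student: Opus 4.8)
The plan is to prove the two inclusions $V\subseteq W$ and $W\subseteq V$ separately, using functoriality of $\inv$ over the open set $\{s\neq 0\}\subset B$, and $\GG_m$-equivariance together with upper-semicontinuity over the special fibre $\{s=0\}$. Note first that $V\subseteq BX$: over $\{s\neq 0\}$ one has $V\cap\{s\neq 0\}=Z\times\GG_m\subseteq X\times\GG_m$, and $V\simeq Z\times\AA^1_s$ is the closure of this in $B$. For $V\subseteq W$: the open $BX\cap\{s\neq 0\}$ is $X\times\GG_m$ inside $B\cap\{s\neq 0\}\simeq Y\times\GG_m$, and the projection $\pi\colon Y\times\GG_m\to Y$ is smooth with $\pi^{-1}X=X\times\GG_m$, so by functoriality $\inv_{BX}(q)=\inv_X(\pi(q))$ there. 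Hence over $\{s\neq 0\}$ the function $\inv_{BX}$ attains the value $\max\inv_X$ exactly along $Z\times\GG_m=V\cap\{s\neq 0\}$, by hypothesis (a); by hypothesis (b) this value equals $\max\inv_{BX}$, and since $W$ is closed it contains the closure $V$ of $V\cap\{s\neq 0\}$. In particular $\inv_{BX}\equiv m:=\max\inv_X$ on all of $V$, including the cone vertex $V\cap\{s=0\}$.

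For $W\subseteq V$, take $q\in BX\setminus V$. If $s(q)\neq 0$ then the computation above together with hypothesis (a) gives $\inv_{BX}(q)<m$, so $q\notin W$. Otherwise $q$ lies in the weighted normal cone $N:=N_{Z\subset X}=BX\cap\{s=0\}$, and it remains to prove $\inv_{BX}<m$ on $N\setminus V$. Applying functoriality to the automorphisms of the pair $(B,BX)$ coming from the $\GG_m$-action shows $\inv_{BX}$ is $\GG_m$-invariant, hence constant on each $\GG_m$-orbit of $N$; since the weights $w_i$ are positive, the orbit through a non-vertex point $q=(z,v)$ of $N$ limits as $t\to 0$ to the vertex point $(z,0)\in V\cap N$, so upper-semicontinuity gives $\inv_{BX}(z,v)\le\inv_{BX}(z,0)=m$. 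This recovers hypothesis (b), but not yet the strict inequality we need.

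The main obstacle is precisely this last strictness --- ruling out that $\inv_{BX}$ already equals $m$ somewhere on $N\setminus V$. I do not see how to get it from the formal properties of $\inv$ alone; indeed, granting $V\subseteq W$, the Claim is equivalent to part (1) of Lemma \ref{Lem:criterion}, so it carries the whole technical weight of the argument. The way I would attack it: since $Z=V(\oJ)$ is the \emph{smooth} maximal locus of $\inv_X$, the scheme $X$ is normally flat along $Z$, so $N\to Z$ is flat with fibre over $z$ a $\GG_m$-homogeneous cone $C_z\subseteq\AA^k$; one then opens up the recursive construction of the invariant (the order of $I_X$, and then companion/coefficient ideals and maximal contact) to show that the controlling invariant of a nonzero homogeneous cone drops strictly off its vertex, whence $\inv_{BX}<m$ on $N\setminus V$. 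This normal-flatness-plus-induction input is where the real content lies and cannot be circumvented by the bookkeeping above.
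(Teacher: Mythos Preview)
You have set up the argument correctly up to the point you yourself flag, but the ``obstacle'' you identify is not an obstacle: it is resolved by one of the formal properties you did not use, namely that the invariant is \emph{smooth}, so that its maximal locus $W$ is a smooth subscheme of $B$. Over $\{s\neq 0\}$ you already showed $W\cap\{s\neq 0\}=Z\times\GG_m=V\cap\{s\neq 0\}$, so every irreducible component of $W$ other than $V$ is contained in $\{s=0\}$; in particular $V$ is an irreducible component of $W$. Since $W$ is smooth, its irreducible components are its connected components, hence $V$ is a \emph{connected component} of $W$. Now your own orbit-closure argument finishes the job: if $q\in W\cap(N\setminus V)$, the closure of its $\GG_m$-orbit lies in $W$ (functoriality plus closedness) and meets $V$ at the limit point, so $q$ lies in the connected component $V$ of $W$ --- a contradiction.

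Two remarks on what you wrote. First, you do not need normal flatness, homogeneous cones, or any look inside the recursive definition of $\inv$; the Claim is a purely formal consequence of hypotheses (a), (b) together with upper-semicontinuity, functoriality, and smoothness of $\inv$. The genuine technical weight of resolution lies elsewhere, in constructing an $\inv$ and a $\oJ$ satisfying (a) and (b) --- that is Theorem~\ref{Th:resolution}, not this Claim. Second, your preliminary inequality $\inv_{BX}(z,v)\le\inv_{BX}(z,0)$ is correct but superfluous once you have the connected-component statement; the contradiction comes not from comparing values but from the connectedness of the orbit closure.
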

\begin{proof}[Proof of claim]

By smoothness of the invariant, $W$ is smooth.

First we have by assumption (a) that the maximal locus of $\inv_X$ is $Z$. 

Since the locus in $BX$ where $s\neq 0$ is just $X\times \GG_m$, and $\GG_m$ is smooth, we have by functoriality that the invariant in this locus coincides with that on the image in $X$. So the maximal locus of $\inv_{BX}$ on the locus $s\neq 0$ is just $Z \times \GG_m$.

 By assumption (b) the maximal value on this locus coincides with the maximal value on the whole of $BX$, which is attained along $W$. Since the maximal locus $W \subset BX$ is closed, it contains $V$ as an irreducible component. Since $V$ and $W$ are smooth, \begin{center}$V$ is a connected component of $W$.\end{center}

Suppose there is a point $b\in N_{Z\subset X} \smallsetminus Z$ where the maximal value is also attained. Again by functoriality, the invariant attains its maximum along the whole orbit $C= \{t\cdot b\,:\, t\in \GG_m\}$. Since the maximal locus is closed, the same is true of the closure $\bar C$ of $C$, which meets $Z$ at a point $b_0$. In other words, $V$ is not a connected component of $W$, contradicting the highlighted conclusion above. 

Thus the claim holds, and the maximal locus of $\inv_{BX}$ is $V$.
\end{proof}
\subsection{Proof of part (1).}
Continuing with part (1) of the lemma, the claim
means that  $$\max_{b\notin V} \inv_{BX} < \max \inv_{BX} = \max \inv_X,$$
where the last equality is assumption (2) of the Lemma used once more.

Note that $B\setminus V \to Bl_\oJ(Y)$ is a smooth morphism, being the quotient by a smooth group-scheme. The preimage of $X' = Bl_\oJ(X)$ is precisely $BX \smallsetminus V$. Using functoriality again, we have that 
$$\max \inv_{Bl_\oJ(X)} = \max_{b\notin V} \inv_{BX}.$$
Combining with the inequality above we obtain 

$$\max \inv_{Bl_\oJ(X)} < \max \inv_X,$$ as required.
\hfill\qed

\section{Resolution in characteristic 0 --- how does it work?}
\subsection{Statement}
From here on we assume our base field has characteristic 0.
Here is our main task:
\begin{theorem}[{\cite{ATW-weighted}, see also \cite{McQuillan}}]\label{Th:resolution} There is a smooth, functorial singularity invariant $\inv$ and, for every $X\subset Y$, there is a center $\oJ$ satisfying (a) and (b) of Lemma \ref{Lem:criterion}(1). 
\end{theorem}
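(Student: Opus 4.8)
The plan is to build $\inv$ by induction on $\dim Y$ through \emph{maximal contact}, to extract the center $\oJ$ from the value of $\inv$ at its maximal point, and to observe that properties (a) and (b) of Lemma~\ref{Lem:criterion} are designed into the construction — indeed (b) is what pins down the weights $w_i$.

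\emph{Setting up the induction.} One works not with $X\subset Y$ alone but with a log smooth pair $(Y,E)$, $E$ a simple normal crossings divisor (initially $E=\emptyset$; the exceptional divisor of a weighted blowup gets added to it), and with the ideal $\cI=\cI_X$ — more flexibly a $\QQ$-ideal / idealistic exponent, and a logarithmic one. The first entry of $\inv$ at $p$ is the order $a_1:=\ord_p(\cI)$, the largest $d$ with $\cI\subseteq\m_p^{\,d}$ (in the logarithmic version, the order along $E$ is separated off first). In characteristic $0$ the sheaf of (logarithmic) differential operators is large enough that $\ord_p\!\left(\cD^{\,a_1-1}\cI\right)=1$ along the top locus of $\ord(\cI)$; picking a local section $x_1$ of order $1$ gives a \emph{maximal contact} hypersurface $H=V(x_1)$ containing that top locus near $p$. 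One forms the \emph{coefficient ideal} $\cC(\cI)|_H$ on $H$ — assembled from the derivative ideals $\cD^{\,j}\cI$ with exponents chosen so that $\ord$ of $\cC(\cI)|_H$ faithfully remembers $\ord(\cI)$ — and recurses on $H$, which has one fewer dimension. The recursion stops when the restricted ideal becomes monomial with respect to the accumulated boundary (or trivial); it outputs a tuple $\inv_X(p)=(a_1,a_2,\dots,a_k)$ valued in a well-ordered $\Gamma$ ordered lexicographically. Upper semicontinuity and the nonsingularity criterion are checked stagewise.

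\emph{Reading off the center; property (a) and smoothness.} At a point $p$ of the maximal locus, the maximal-contact coordinates $x_1,\dots,x_k$ produced by the induction, together with the entries $a_1,\dots,a_k$, determine
\[
\oJ \;=\; \bigl(x_1^{1/w_1},\dots,x_k^{1/w_k}\bigr),
\]
where the $w_i$ are the denominators dictated by requiring the weighted (monomial) orders of the successive coefficient ideals to be balanced — e.g.\ in the model case $\cI=(x^2-y^3)$ one finds $\inv=(2,3)$ and $\oJ=(x^{1/3},y^{1/2})$, for which $v_{\oJ}(x^2-y^3)=6$ is attained twice. Property (a) then holds by construction: the top locus of $\inv_X$ is cut out successively by $x_1=0$, then $x_1=x_2=0$, \dots, ending at $V(x_1,\dots,x_k)=V(\oJ)$. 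The same induction shows this locus is \emph{smooth}: $H$ is smooth, the inductive top locus inside $H$ is smooth, and the $x_i$ form a partial coordinate system.

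\emph{The hard part: functoriality and property (b).} Two coherence statements remain, and they are the technical heart. First, functoriality: although $H$ and $\cC(\cI)|_H$ involve choices, the resulting $\inv$ must be independent of them and compatible with smooth pullback. This is exactly where characteristic $0$ is unavoidable, and it is handled by W{\l}odarczyk's \emph{homogenization} of the coefficient ideal (equivalently the comparison lemmas of Giraud, Bierstone--Milman, Encinas--Villamayor), which canonicalizes the construction up to the equivalence of $\QQ$-ideals that matters. Second, property (b), $\max\inv_X=\max\inv_{BX}$: on the deformation to the weighted normal cone $B$ one recomputes $\inv$ using the transformed coordinates $x_i'$ and the exceptional variable $s$, and one verifies that with the weights $w_i$ chosen as above the weighted order of $\cI$ along $V$ equals $a_1$ and the coefficient-ideal recursion on $B$ matches the one on $Y$; hence $\inv_{BX}$ at the vertex equals $\inv_X$ at $p$, and the vertex is the maximal locus by the Claim of the previous section. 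It is precisely to enforce this equality — not $<$, from over-blowing, and never the impossible $>$ — that the weights are defined the way they are. Carrying this out rigorously, together with the functoriality comparison, is the main obstacle; it is what the long combinatorial bookkeeping of classical Hironaka-style resolution is replaced by here, and McQuillan's stack-theoretic approach gives an alternative route to the same conclusion.
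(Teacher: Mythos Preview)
Your outline follows the paper's strategy --- maximal contact, coefficient ideals, induction on dimension, then verify the criterion on $B$ --- but two points deserve correction.

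First, you graft on a logarithmic boundary $E$ tracking exceptional divisors, with termination ``when the restricted ideal becomes monomial with respect to the accumulated boundary.'' This is the classical Bierstone--Milman / Encinas--Villamayor bookkeeping, and it is precisely what the weighted approach \emph{eliminates}: the paper's invariant lives on bare $Y$, no divisor, no history; the recursion stops simply when the restricted ideal is $0$ or a unit. Importing $E$ is not fatal, but it obscures the main simplification and makes your termination clause foreign to the argument actually being run. Relatedly, well-definedness is handled here not by homogenization but by the intrinsic characterization of Proposition~\ref{Prop:center-unique}: $J_\cI$ is the unique admissible center of maximal invariant.

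Second, your treatment of (b) is too vague to count as a proof. The paper's engine is the chain-rule inclusion $(\cD\cI)' \subset \cD(\cI')$ on $B$ (Lemma~\ref{Lem:transform-derivative}, using $w_1\geq w_i$), which yields the sandwich $\cI'^{(a_1-1)!}\subset C(\cI,a_1)'\subset C(\cI',a_1)$ of Proposition~\ref{Prop:transform-coefficient} and lets one replace $\cI$ by its coefficient ideal. One then checks directly that $\partial_{x_1'}^{a_1}$ applied to the transform is a unit, so the maximal order on $B$ is still $a_1$ with $x_1'$ a maximal contact element; induction on $V(x_1')\subset B$ --- itself the deformation to the weighted normal cone for the truncated center $(x_2^{1/w_2},\ldots,x_k^{1/w_k})$ --- finishes. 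This derivative-transform lemma is the technical heart you omit.
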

So by Lemma \ref{Lem:criterion}(2) resolution of singularities in characteristic 0 holds true.

The main purpose of this section is therefore to explain \emph{how} resolution of singularities in characteristic 0 works. 

\subsection{Preview examples}

Before we dive into a series of definitions, examples will show the invariant and center in action.
\begin{example} Let $X = V(x^2-y^2z)$. The most singular point is the origin $V(x,y,z)$, where the variables $x,y,z$ appear in degrees $2,3,3$ respectively. So the invariant we will define has value $(2,3,3)$ at the origin, one writes a corresponding center $J= (x^2,y^3,z^3)$, with associated \emph{reduced} center $\oJ = (x^{1/3},y^{1/2},z^{1/2}) = J^{1/6}$ which we can actually blow up.

On the blowup we have $x = s^3x', y=s^2y', z=s^2z'$. Plugging in the equation becomes $s^6(x'^2+ y'^2 z')$ on $B$. The proper transform $BX$ is given by $V(x'^2+ y'^2 z')$, and indeed its maximal locus is again the locus $V(x',y',z')$. This locus is removed in $B_+$, hence the invariant drops.

\begin{exercise}\label{Ex:Whitney-Newton} Draw the Newton polygon of $X = V(x^2-y^2z)$ and of the center $(x^2, y^3, z^3)$. What can you surmise on their relationship? (See Section \ref{Sol:Whitney-Newton}.)\end{exercise}
\begin{exercise}\label{Ex:Whitney} Do your best at drawing a picture of the blowup above, in a way that shows how it resolves the pinch point.  (See Section \ref{Sol:Whitney}.)\end{exercise}

\end{example}

\begin{example} Consider the same $X$ but with the blowup but with $\oJ=(x,y,z)$. In this case the proper transform $BX$ is defined by the equation $$x'^2 + y'^2z's.$$ we notice that at the point where $x'=y'=z'=s=0$ has these variables appearing in degrees $2, 4,4,4$ which is larger than the original $2,3,3$. In other words, it does not satisfy the criterion. In this case, as is well-known, the standard blowup of $X$ has an isomorphic singularity: for any $z'\neq 0$ the equation above is isomorphic to $X$, so the invariant stays the same after blowup.
\end{example}

\begin{exercise}\label{Ex:Newton} Draw the Newton polygon of $X = V(x^5+x^3y^3+ y^{100})$.  (See Section \ref{Sol:Newton}.)\end{exercise}

\begin{example}[Sketch] $X = V(x^5+x^3y^3+ y^{100})$. Considering the Newton polygon,  $\inv(0)=(5,7.5), J= (x^5,y^{7.5}), \oJ = (x^{1/3}, y^{1/2})$. The proper transform is $x'^5+x'^3y'^3+ y'^{100}s^{185}$ and the criterion holds. The invariant of the blowup is $(3,185)< (5,7.5)$ lexicographically.
\end{example}

\begin{exercise} Carry out the example in detail.\end{exercise}

\subsection{Derivatives}
Consider an ideal $\cI \subset \cO_Y$. We write $\cD_X^{\leq a}$ for the sheaf of differential operators of order at most $a$.
\begin{definition} $\cD^a(\cI) = \{\nabla f \ : \ f\in \cI,  \nabla \in \cD_X^{\leq a}\}$. 
\end{definition}
In characteristic 0 we have $\cD^a(\cI) = \cD^1\cD^{(a-1)}(\cI)$. 
\begin{example}
$\cD((x^2+y^2z)) = (x,yz,y^2)$. 
\end{example}
\begin{exercise} Compute $\cD(x^5+x^3y^3+ y^{100})$.\end{exercise}
\subsection{Order}
\begin{definition} $\ord_\cI(p) \ =\  \min\{\,a\ :\ (\cD^a(\cI))_p = \cO_{Y,p}\,\}$ (and the order of the zero ideal is $\infty$).
\end{definition}
Note: $\ord_\cI(p) \geq a \quad \Leftrightarrow \quad p \in V(\cD^{a-1}(\cI)).$

This implies: 
\begin{observation} The function $\ord_\cI$ is upper semicontinuous. \end{observation}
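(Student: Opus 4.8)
The plan is to reduce the statement to the displayed observation immediately preceding it, namely that $\ord_\cI(p)\geq a$ if and only if $p\in V(\cD^{a-1}(\cI))$. Since the target of $\ord_\cI$ is the well-ordered set $\NN\cup\{\infty\}$, upper semicontinuity means precisely that each superlevel set $\{p : \ord_\cI(p)\geq a\}$ is closed; once the displayed equivalence is in hand this is immediate, as $V(\cD^{a-1}(\cI))$ is Zariski closed by definition.

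So the real content is to establish that equivalence from the definition $\ord_\cI(p)=\min\{a : (\cD^a(\cI))_p=\cO_{Y,p}\}$. First I would record the monotonicity $\cD^{a'}(\cI)\subseteq\cD^{a}(\cI)$ whenever $a'\leq a$: this holds because $\cD_X^{\leq a'}\subseteq\cD_X^{\leq a}$ as sheaves of differential operators (and in particular $\cI=\cD^0(\cI)\subseteq\cD^a(\cI)$ via the identity operator). Consequently the set of integers $a'$ with $(\cD^{a'}(\cI))_p=\cO_{Y,p}$ is upward closed, so the minimum defining $\ord_\cI(p)$ behaves well: we have $\ord_\cI(p)\leq a-1$ exactly when $(\cD^{a-1}(\cI))_p=\cO_{Y,p}$, i.e. when $p\notin V(\cD^{a-1}(\cI))$. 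Negating gives $\ord_\cI(p)\geq a \Leftrightarrow p\in V(\cD^{a-1}(\cI))$, as wanted. The case of the zero ideal is consistent, since then $\cD^{a-1}(\cI)=0$ and $V(0)=Y$ for every $a$, matching $\ord_\cI\equiv\infty$.

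I do not anticipate a genuine obstacle here; the one point meriting care is the monotonicity of the $\cD^a(\cI)$ in $a$, which makes the "$\min$" in the definition of order compatible with the vanishing-locus description — without it the superlevel sets would not obviously be the $V(\cD^{a-1}(\cI))$. With that in place the argument is a one-line deduction: the superlevel sets of $\ord_\cI$ are all closed, hence $\ord_\cI$ is upper semicontinuous. \hfill\qed
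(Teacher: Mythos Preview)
Your proposal is correct and follows exactly the paper's approach: the paper deduces the observation directly from the displayed equivalence $\ord_\cI(p)\geq a \Leftrightarrow p\in V(\cD^{a-1}(\cI))$, which is precisely what you do. Your additional care in justifying the monotonicity $\cD^{a'}(\cI)\subseteq\cD^a(\cI)$ (needed to make the ``$\min$'' behave) is a detail the paper leaves implicit, but the argument is otherwise identical.
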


Note also that 
\begin{observation}  The function $\ord_\cI$ is functorial for smooth morphisms. 
\end{observation}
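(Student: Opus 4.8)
The plan is to deduce this from the sharper assertion that $\cD^a$ itself commutes with smooth pullback: if $\phi\colon Y_1\to Y$ is smooth and $\cI_1:=\cI\cdot\cO_{Y_1}$ is the pullback ideal, then $\cD^a(\cI_1)=\cD^a(\cI)\cdot\cO_{Y_1}$ for every $a\ge 0$. Granting this, fix $p_1\in Y_1$ with image $p=\phi(p_1)$. Smoothness implies flatness, so the induced local homomorphism $\cO_{Y,p}\to\cO_{Y_1,p_1}$ is faithfully flat; hence an ideal $\mathfrak a\subseteq\cO_{Y,p}$ equals $\cO_{Y,p}$ if and only if $\mathfrak a\cdot\cO_{Y_1,p_1}=\cO_{Y_1,p_1}$. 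Since $(\cD^{b}(\cI))_p\cdot\cO_{Y_1,p_1}=(\cD^{b}(\cI)\cO_{Y_1})_{p_1}=(\cD^{b}(\cI_1))_{p_1}$ for every $b$, it follows that $p\in V(\cD^{b}(\cI))$ if and only if $p_1\in V(\cD^{b}(\cI_1))$ for all $b$; by the criterion $\ord_\cK(q)\ge a\Leftrightarrow q\in V(\cD^{a-1}(\cK))$ recalled above this says precisely $\ord_{\cI_1}(p_1)=\ord_\cI(p)$.

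It remains to prove $\cD^a(\cI_1)=\cD^a(\cI)\cO_{Y_1}$, which is where the real content lies. The statement is local, so I may assume $Y$ affine with local coordinates $x_1,\dots,x_n$. I argue by induction on $a$; the case $a=0$ is the definition $\cI_1=\cI\cO_{Y_1}$. For the inductive step, the characteristic-$0$ identity $\cD^a(\cJ)=\cD^1\cD^{a-1}(\cJ)$ recalled above reduces us, applied to $\cJ:=\cD^{a-1}(\cI)$ (whose pullback is $\cD^{a-1}(\cI_1)$ by induction), to proving $\cD^1(\cJ\cO_{Y_1})=\cD^1(\cJ)\cO_{Y_1}$ for an arbitrary ideal $\cJ\subseteq\cO_Y$. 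Now $\cD^1(\cJ)=\cJ+\{Df\,:\,f\in\cJ,\ D\text{ a vector field on }Y\}$. Since $\phi$ is smooth the cotangent sequence $0\to\phi^*\Omega_Y\to\Omega_{Y_1}\to\Omega_{Y_1/Y}\to 0$ is exact with locally free terms, hence locally split; dually, locally on $Y_1$ every vector field is a sum of a \emph{vertical} one (a section of $T_{Y_1/Y}$) and a \emph{horizontal} one lifting a section of $\phi^*T_Y$. A vertical field annihilates everything pulled back from $Y$, in particular annihilates $\cJ\cO_{Y_1}$; a horizontal field is locally $\sum g_i\,\partial/\partial x_i$ and carries $f\in\cJ$ into $\sum g_i\,\phi^*(\partial f/\partial x_i)\in\cD^1(\cJ)\cO_{Y_1}$. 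This gives $\cD^1(\cJ\cO_{Y_1})\subseteq\cD^1(\cJ)\cO_{Y_1}$. For the reverse inclusion, $\cJ\cO_{Y_1}\subseteq\cD^1(\cJ\cO_{Y_1})$ is clear, and for $f\in\cJ$ and a vector field $D$ on $Y$ any local lift $\widetilde D$ of $D$ (which exists since $T_{Y_1}\to\phi^*T_Y$ is locally split) satisfies $\widetilde D(\phi^*f)=\phi^*(Df)$, so $(Df)\cO_{Y_1}\subseteq\cD^1(\cJ\cO_{Y_1})$; hence equality, completing the induction.

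The step I expect to be the main obstacle is the order-$1$ computation with the split tangent sequence: one must verify carefully that horizontal fields reproduce \emph{exactly} $\cD^1(\cJ)\cO_{Y_1}$ while vertical fields contribute nothing, and that this coherent-sheaf identity genuinely localizes. An alternative, perhaps cleaner, route bypasses differential operators on $Y_1$ altogether: first show (valid in characteristic $0$) that $\ord_\cI(p)$ equals the $\m_p$-adic order of $\cI_p$ in $\cO_{Y,p}$ — one direction because an operator of order $\le a-1$ carries $\m_p^a$ into $\m_p$, the other by differentiating a lowest-degree monomial, which is where characteristic $0$ enters — and then observe that the map of complete local rings $\widehat{\cO}_{Y,p}\to\widehat{\cO}_{Y_1,p_1}$ induced by a smooth morphism is, after a faithfully flat extension of residue fields, a formal power series extension $\widehat{\cO}_{Y,p}\llbracket u_1,\dots,u_r\rrbracket$, which visibly preserves $\m$-adic order; the only nuisance there is the residue-field bookkeeping.
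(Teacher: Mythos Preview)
Your argument is essentially correct, with one small slip: a vertical vector field does not annihilate $\cJ\cO_{Y_1}$ --- applied to $gf$ with $g\in\cO_{Y_1}$ and $f$ pulled back, it gives $(Dg)f$, which is nonzero in general. What is true (and sufficient) is that it sends $\cJ\cO_{Y_1}$ into itself, hence into $\cD^1(\cJ)\cO_{Y_1}$. With that correction the order-$1$ computation goes through, and the induction and the faithfully-flat descent of the unit ideal are fine.

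The paper takes a different route. Rather than analyzing the split tangent sequence, it uses the local factorization of a smooth morphism as an \'etale map followed by a projection $Y\times\AA^r\to Y$: derivatives are compatible with \'etale morphisms (since $\Omega$, hence $\cD^{\le a}$, pulls back isomorphically), and for the affine-space factor one checks by direct computation that adjoining free variables does not change $\ord_\cI$. Your approach proves the stronger sheaf identity $\cD^a(\cI\cO_{Y_1})=\cD^a(\cI)\cO_{Y_1}$ in one stroke and avoids invoking the \'etale-local structure theorem; the paper's approach is shorter but leans on that structure result and leaves the ``direct computation'' to the reader. Your alternative via $\m$-adic order and formal completion is also valid and is closest in spirit to the paper's affine-product step.
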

This follows because derivatives are compatible with \'etale morphisms; up to \'etale morphisms a smooth morphism is like a product with affine space; and $\ord_\cI$ is compatible with a product with affine space by direct computation.

\begin{exercise} Find the order of $x^2 + y^2z$ and of $x^5 + x^3y^3 + y^{100}$ at the origin.
\end{exercise}
\subsection{Maximal contact}
\begin{definition}[{\cite{Giraud}}] Consider a nonzero  ideal $\cI$ with $\ord_\cI(p) = a>0$. An element $x \in \cO_p$ is a \emph{maximal contact element}, and its vanishing locus $H = V(x)$ is a \emph{maximal contact hypersurface at $p$}, if $x \in \cD^{a-1}(\cI)_p$ and $\cD^1(x) = \cO_p$. 
\end{definition}
\begin{lemma} In characteristic 0, a maximal contact element  exists.
\end{lemma}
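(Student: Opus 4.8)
The plan is to produce a maximal contact element by repeated differentiation down to the "order 1" level. Let $\cI$ be a nonzero ideal with $\ord_\cI(p) = a > 0$. By the characteristic-zero identity $\cD^{a}(\cI) = \cD^{1}\cD^{a-1}(\cI)$, and more generally $\cD^{j}(\cI) = \cD^{1}\cdots\cD^{1}(\cI)$ ($j$ times), the order function behaves predictably under differentiation: $\ord_{\cD^{1}(\cI)}(p) = a-1$ whenever $a \geq 1$, because $\cD^{b}(\cD^{1}\cI) = \cD^{b+1}(\cI)$, so the smallest $b$ with $(\cD^{b}\cD^{1}\cI)_p = \cO_{Y,p}$ is exactly $a-1$. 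Iterating, $\cD^{a-1}(\cI)$ has order $1$ at $p$.

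So first I would reduce to the case $a = 1$: replacing $\cI$ by $\cJ := \cD^{a-1}(\cI)$, which still satisfies $\cJ_p \neq \cO_{Y,p}$ (as $a-1 < a = \ord_\cI(p)$) and has $\ord_\cJ(p) = 1$. Now $\ord_\cJ(p) = 1$ means precisely that $\cD^{0}(\cJ)_p = \cJ_p \neq \cO_{Y,p}$ but $\cD^{1}(\cJ)_p = \cO_{Y,p}$. The latter says that some element of $\cD^{1}(\cJ)_p$ is a unit; since $\cD^{1}(\cJ)$ is generated by $\cJ$ together with $\{\partial_{x_i} f : f \in \cJ\}$ in local coordinates, and the elements of $\cJ_p$ all lie in the maximal ideal $\m_p$, there must exist $f \in \cJ_p$ and a derivation $\partial$ with $\partial f \notin \m_p$. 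Equivalently, choosing $x := f$, we have $x \in \cD^{a-1}(\cI)_p$ (unwinding the reduction) and $\cD^{1}(x)_p = \cO_{Y,p}$ since $\partial x = \partial f$ is a unit. This $x$ is the desired maximal contact element, and $H = V(x)$ is smooth at $p$ because $dx(p) \neq 0$.

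The one genuine point requiring characteristic $0$ — and the step I expect to be the crux — is the identity $\cD^{a}(\cI) = \cD^{1}\cD^{a-1}(\cI)$, which underlies the reduction to $a=1$; in positive characteristic $\cD^{a-1}$ need not lower the order by exactly $a-1$ (divided-power / Hasse derivatives enter), and maximal contact genuinely fails. In characteristic $0$ this identity is itself elementary: an order-$\leq a$ operator is a sum of $a$-fold products of vector fields plus lower order terms, i.e. $\cD^{\leq a}_Y = \cD^{\leq 1}_Y \cdot \cD^{\leq a-1}_Y$, from which the claim on ideals of derivatives follows by applying both sides to $\cI$. Granting this (stated in the excerpt), the rest is the linear-algebra observation above that "order $1$" forces a nonvanishing differential. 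I would also remark that the construction is local at $p$ and that different choices of $f$ give different maximal contact hypersurfaces — their non-canonicity is exactly what later forces the coefficient-ideal bookkeeping — but existence, which is all that is claimed here, is immediate once the reduction to order $1$ is in place.
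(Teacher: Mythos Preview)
Your proposal is correct and follows essentially the same route as the paper. The paper's proof is a one-liner: from $\cO_p = \cD^a(\cI) = \cD^1(\cD^{a-1}(\cI))$ one immediately extracts $x\in \cD^{a-1}(\cI)$ with $\cD^1(x)=\cO_p$; your version unpacks this by first observing that $\cD^{a-1}(\cI)$ has order~$1$ and then picking an element with nonvanishing differential, which is the same argument with the intermediate step made explicit.
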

Indeed, $\cO_p = \cD^a(\cI) = \cD^1(\cD^{a-1}(\cI))$ so there is $x\in \cD^{a-1}(\cI)$ with $\cD^1(x) = \cO_p$.

Note that $x \in \cD^{a-1}(\cI)$ implies that $$V(x) \supset V(\cD^{a-1}(\cI)) = \{p\ : \ \ord_\cI(p) \geq a\}.$$
So: 
\begin{observation} A maximal contact hypersurface is a smooth hypersurface which \emph{schematically} includes all nearby points of order $a$.\end{observation}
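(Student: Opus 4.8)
The plan is to unwind the two defining properties of a maximal contact element $x$ at $p$ and read off the two assertions separately: smoothness of $H=V(x)$ near $p$, and the scheme-theoretic inclusion of the order-$\ge a$ locus into $H$.

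First I would record that $x$ lies in the maximal ideal $\m_p$. By hypothesis $x\in\cD^{a-1}(\cI)_p$, and since $\ord_\cI(p)=a>a-1$, the note following the definition of order gives $p\in V(\cD^{a-1}(\cI))$; hence $(\cD^{a-1}(\cI))_p$ is a proper ideal of the local ring $\cO_{Y,p}$, so it is contained in $\m_p$, and therefore $x\in\m_p$. In particular $H=V(x)$ passes through $p$, so the smoothness assertion is not vacuous.

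Next I would exploit the other condition, $\cD^1(x)=\cO_p$. In \'etale local coordinates $z_1,\dots,z_n$ at $p$, the ideal $\cD^1(x)$ is generated by $x$ together with the first partials $\partial_{z_1}x,\dots,\partial_{z_n}x$. This ideal being the unit ideal while $x\in\m_p$ forces $\partial_{z_i}x(p)\neq 0$ for some $i$; equivalently, the image of $x$ in $\m_p/\m_p^2$ is nonzero. Hence $x$ is part of a regular system of parameters at $p$, so $H=V(x)$ is a smooth hypersurface in some neighborhood of $p$, smoothness being an open condition. For the last clause, the relation $x\in\cD^{a-1}(\cI)_p$ reads $(x)\subseteq\cD^{a-1}(\cI)_p$ as ideals, i.e.\ near $p$ the closed subscheme $V(\cD^{a-1}(\cI))=\{\,\ord_\cI\ge a\,\}$ is contained in $V(x)$ not merely set-theoretically but as closed subschemes --- which is what ``schematically'' records. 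This is precisely the displayed inclusion $V(x)\supset V(\cD^{a-1}(\cI))$ stated just above the Observation.

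I do not expect a real obstacle here: the Observation is essentially a repackaging of the definition of maximal contact element. The one point that is not purely formal is noting $x\in\m_p$ --- it is not part of the definition but follows from the order hypothesis, and it is exactly what prevents a unit (whose vanishing locus is empty, hence vacuously smooth) from being counted as a maximal contact element.
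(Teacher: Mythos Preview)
Your proof is correct and follows the same approach as the paper: the scheme-theoretic inclusion comes from $x\in\cD^{a-1}(\cI)$ giving $V(x)\supset V(\cD^{a-1}(\cI))=\{\ord_\cI\ge a\}$, and smoothness of $V(x)$ from $\cD^1(x)=\cO_p$. The paper's justification is essentially the one displayed line preceding the Observation, so you have simply supplied more detail (including the check that $x\in\m_p$, which the paper leaves implicit).
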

\begin{remark} A maximal contact element does not necessarily exist in characteristic $p$: R. Narasimhan \cite{Narasimhan} gave the example of a hyersurface $$V(x^2 + yz^3 + zw^3 + y^7w)\subset \AA^4$$ in characteristic 2 with a curve of singular points having 4-dimensional tangent space at the origin. You can still resolve it, see \cite{Hauser-blowups}. Hironaka's replacement for maximal contact ($x=0$ in this case)  is a hypersurface that realizes the characteristic polyhedron, but I digress.
\end{remark} 

\begin{exercise} Find at least two maximal contact elements for each of $x^2 + y^2z$ and of $x^5 + x^3y^3 + y^{100}$ at the origin.
\end{exercise}
A maximal contact hypersurface is thus a candidate for induction on dimension. But what subscheme should we consider on it?

\subsection{Coefficient ideals}

Consider an ideal $\cI$ and an integer $a$ (which usually stands for $a = \ord_\cI(p)$). We consider the ideals $\cI, \cD^1(\cI),\ldots,\cD^{a-1}(\cI)$ with corresponding weights (standing for expected orders at $p$) $a, a-1,\ldots,1$. 
\begin{definition}[{\cite{Kollar}}] The coefficient ideal of $\cI$ at $p$ is
$$C(\cI,a):= \sum_{\sum ib_i\geq a!} \prod _{i=0}^{a-1} (\cD^{a-i}(\cI))^{b_i}.$$
\end{definition}
\begin{remark}
Notice that if $\ord_\cI(p) = a$ then $\ord_{C(\cI,p)}(p) = a!$, so there is a rescaling factor of $(a-1)!$ to be taken into account. The canonical objects to be considered are $\QQ$-ideals, in which case one considers  $\cI$ which has order $a$ along with $\cD^{a-i}(\cI)^{a/i}$ also with order $a$, and then $$C(\cI,a)^{1/(a-1)!} = \sum_{i=1}^a \cD^{a-i}(\cI)^{a/i}$$ also has order $a$. We will not use this canonical formalism here. It is used in a paper of  Bierstone  and Milman \cite[Section 3.3]{Bierstone-Milman-functorial}, in Wlodarczyk's treatment \cite{Wlodarczyk-cobordant},  and in our foliations paper \cite{ABTW-foliated}.
\end{remark}
The coefficient ideal takes into account all possible terms that should contribute to the restriction of $\cI$ to any maximal contact hypersurface. 
If $x$ is a maximal contact element, we write $\cI[x] := C(\cI,a)|_{V(x)}$. 

\begin{exercise} Let $\cI = (f)$ be principal, and $f = x^a + c_1x^{a-1} + c_2x^{a-2} + \cdots + c_a.$ Show that when $c_1 = 0 $ the element $x$ is a maximal contact. In this case show that $c_i \in \cD^{a-i}(\cI)|_{V(x)}$, and that a suitable power contributes to $\cI[x]$.\end{exercise}

\begin{exercise} Compute $C(\cI,a)$ and $\cI[x]$ in the examples  $x^2 + y^2z$ and  $x^5 + x^3y^3 + y^{100}$ at the origin.
\end{exercise}

\subsection{Invariant and center}

We can now define both an invariant and a center inductively, assuming an invariant $\inv_{\cI}(p)$ and center $J_\cI$ were defined in dimension $\dim (Y)-1$. 
\begin{definition}
Suppose $\ord_\cI(p) = a_1$ with maximal contact element $x_1$. Suppose $$\inv_{\cI[x_1]} =  (b_2,\ldots,b_k)$$ with center $$J_{\cI[x_1]} = (\ox_2^{b_2},\ldots, \ox_k^{b_k}).$$
set $$a_i = b_i/(a_1-1)!$$ and let $x_2,\ldots,x_k$ be arbitrary elements restricting to $\ox_i$ on $V(x_1)$.
Define 
$$\inv_\cI(p) := (a_1,a_2,\ldots,a_k)$$
and $$J_\cI := (x_1^{a_1},x_2^{a_2},\ldots,x_k^{a_k}).$$
\end{definition}

\begin{exercise} Compute the invariant and center in the examples  $x^2 + y^2z$ and of $x^5 + x^3y^3 + y^{100}$ at the origin.
\end{exercise}

\begin{exercise} What should one do if $\cI$ is a unit ideal? What should one do if $\cI[x_1] = 0$?
\end{exercise}

We order invariants lexicographically, where a truncation of a sequence is larger: $(2)>(2,3)$. The set of values it takes is easily shown to be well-ordered --- while the entries are rational, their denominators are bounded in terms of the earlier entries.

\begin{exercise} Verify this last statement.
\end{exercise}

\begin{remark}
There is one property of the invariant and center I do not address here --- namely that they are well-defined. The only issue is the choice of a maximal contact element $x_1$ and the lifts $x_i$ of $\ox_i$. 
Proposition \ref{Prop:center-unique} below gives a reason why it is well defined.
\end{remark}
\subsection{Properties of the invariant}

For ideals, ``resolution" is replaced by a sequence of weighted blowups so that the proper transform (or more precisely the controlled transform) is the unit ideal --- this is called ``principalization". There is a reduction of embedded resolution to this task, which  I do not discuss here. Also, Lemma \ref{Lem:criterion} has an evident analogue for ideals, for which the following is relevant:

\begin{proposition} The function $\inv_\cI$ is a smooth functorial singularity invariant.
\end{proposition}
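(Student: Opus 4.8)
The plan is to verify the three defining properties of a smooth functorial singularity invariant for $\inv_\cI$, by induction on $\dim Y$, with the key work isolated in the inductive step that passes from $\cI[x_1]$ on a maximal contact hypersurface to $\cI$ itself. The base case $\dim Y = 0$ is trivial (every ideal is zero or the unit ideal), so assume the statement for dimension $\dim Y - 1$.

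First I would establish upper-semicontinuity. The first entry $a_1 = \ord_\cI(p)$ is upper-semicontinuous by the Observation in Section on Order. On the (closed, by maximal contact) locus $\{\ord_\cI \geq a_1\}$, which is schematically contained in a fixed maximal contact hypersurface $H = V(x_1)$ near $p$, the remaining entries $(a_2,\ldots,a_k)$ are a fixed rescaling of $\inv_{\cI[x_1]}$, which is upper-semicontinuous on $H$ by the inductive hypothesis. Since the first entry is constant on the stratum where we compare the tails, and the lexicographic order is the dictionary order, the combined function $\inv_\cI$ is upper-semicontinuous; here one must be slightly careful that the value on $\{\ord_\cI = a_1\}$ compares correctly against smaller values of $\ord_\cI$, but this is immediate from the convention that a truncation is larger, so the whole stratum $\{\ord_\cI \geq a_1 +1 \}$ gets a strictly smaller first entry regardless of tails. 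The normalization property, $\inv_\cI(p) = \min\Gamma$ iff $p$ is in the vanishing locus/nonsingular situation, follows because $\ord_\cI(p) \leq 1$ (hence first entry minimal) exactly when $\cI$ is locally generated appropriately, and then the inductive hypothesis handles the tail.

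Next, functoriality for smooth morphisms. Given smooth $\phi: Y_1 \to Y$, the order $\ord_\cI$ is functorial by the Observation on Order; derivatives commute with smooth pullback, so $\cD^{a-i}(\cI)$ and hence the coefficient ideal $C(\cI, a)$ pull back correctly. The one genuine subtlety is maximal contact: $x_1$ is only defined up to the choices flagged in the Remark before this Proposition, so I would either invoke Proposition~\ref{Prop:center-unique} (the well-definedness statement promised in the text) to know $J_\cI$, and hence $\inv_\cI$, is independent of those choices, or argue directly that the pullback $\phi^*x_1$ is a valid maximal contact element upstairs and that $\cI[x_1]$ pulls back to $(\phi^*\cI)[\phi^* x_1]$, then close the induction. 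This reduction to well-definedness is, I expect, the main obstacle — everything else is bookkeeping, but functoriality cannot even be stated coherently until one knows the invariant does not depend on the auxiliary maximal contact element and the lifts $x_i$.

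Finally, smoothness: the maximal locus of $\inv_\cI$. On the top stratum the first entry equals its maximum $a_1^{\max}$, cutting out $V(\cD^{a_1^{\max}-1}(\cI))$, and restricting attention to a maximal contact hypersurface $H$ through a point of this locus, the maximal locus of $\inv_\cI$ is identified set-theoretically with the maximal locus of $\inv_{\cI[x_1]}$ inside $H$ — an intersection of $H$ (smooth) with a smooth subscheme of $H$ by the inductive hypothesis — hence smooth. One subtlety is that the maximal contact hypersurface is only local, so I would note the maximal locus of $\inv_\cI$ is covered by such local pieces and smoothness is local; functoriality (once established) guarantees these local descriptions glue. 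I would conclude by remarking that these three properties are exactly what is required, and that $\inv_\cI$ therefore feeds into the ideal-theoretic analogue of Lemma~\ref{Lem:criterion}. \hfill\qed
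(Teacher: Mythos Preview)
Your proof is correct and follows essentially the same inductive approach as the paper: reduce each of the three properties (upper-semicontinuity, functoriality, smoothness of the maximal locus) to the corresponding property for $\cI[x_1]$ on the maximal contact hypersurface, with the paper being somewhat terser about the well-definedness issue you rightly flag. One minor slip: in your upper-semicontinuity paragraph, the stratum $\{\ord_\cI \geq a_1+1\}$ has a strictly \emph{larger} first entry, not smaller.
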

\begin{proof} 

{\sc  The invariant $\inv_{\cI}$ is a ``singularity invariant" for ideals.}
 
We have already noted that the value set of the invariant is well-ordered.

We show $\inv_\cI $ is upper semicontinuous. Note that $a_1 = \ord_\cI$ is upper semicontinuous, and its maximum locus is contained in $V(x_1)$. By induction $\inv_{\cI[x_1]}$ is upper semicontinuous on $V(x_1)$. It follows that  $\inv_\cI $ is upper semicontinuous.

The invariant's minimum value is $(0)$ taken only for the unit ideal. For resolution of singularities, the invariant of the ideal of a  subvariety of codimension $d$ takes its minimum value $(1,\ldots,1)$, with $d$ entries, at smooth points of $X$. 

{\sc  The invariant $\inv_{\cI}$ is functorial.} We have seen that $\ord_\cI$ is functorial for smooth morphisms. Let us now choose $x_1$ --- it remains maximal contact when pulled back by a smooth morphism. By induction $\inv_{\cI[x_1]}$ is functorial for smooth morphisms, hence  
 $\inv_{\cI}$ is functorial.
 
{\sc  The maximal locus is smooth.} The maximal locus of $\ord_\cI$ lies inside $V(x_1)$. Inductively the maximal locus of $\inv_{\cI[x_1]}$ is $V(x_2,\ldots,x_k)$. Hence the maximal locus of $\inv_\cI$ is    $V(x_1,x_2,\ldots,x_k)$, which is smooth.
\end{proof}

\subsection{Admissibility}  A center $J$ is admissible for an ideal $\cI$ if $v_J(f) \geq 1$ for all $f \in \cI$. 
\begin{lemma} The center $J_\cI$ is admissible for $\cI$. More generally, if $f\in \cD^1(\cI)$ then $v_J(f) \geq 1-i/a_1$, and if $f \in C(\cI,a_1)$ then $v_J(f) \geq (a_1-1)!$\end{lemma}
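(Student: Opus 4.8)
The statement to prove is the Lemma on admissibility: that $J_\cI$ is admissible for $\cI$ (i.e.\ $v_{J_\cI}(f)\geq 1$ for all $f\in\cI$), and more precisely that $v_{J_\cI}(f)\geq 1-i/a_1$ for $f\in\cD^i(\cI)$, and $v_{J_\cI}(f)\geq (a_1-1)!$ for $f\in C(\cI,a_1)$. The natural approach is induction on $\dim Y$, mirroring the inductive definition of $\inv_\cI$ and $J_\cI$. First I would normalize: write $a_1=\ord_\cI(p)$, fix a maximal contact element $x_1$, and recall that $J_\cI=(x_1^{a_1},x_2^{a_2},\ldots,x_k^{a_k})$ where the tail $(x_2^{a_2},\ldots,x_k^{a_k})$ lifts the center $J_{\cI[x_1]}=(\ox_2^{b_2},\ldots,\ox_k^{b_k})$ on $V(x_1)$ with $b_i=(a_1-1)!\,a_i$. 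The monomial valuation $v=v_{J_\cI}$ assigns $v(x_1)=1/a_1$ and $v(x_i)=a_i^{-1}$; note its restriction to $V(x_1)$, rescaled by the factor $(a_1-1)!$, is essentially $v_{J_{\cI[x_1]}}$ (this bookkeeping of the $(a_1-1)!$ rescaling is exactly the content of the Remark after the coefficient-ideal definition).

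\textbf{Key steps.} Step 1: prove the derivative bound $v(f)\geq 1-i/a_1$ for $f\in\cD^i(\cI)$, $0\le i\le a_1-1$, which for $i=0$ gives the admissibility claim $v(f)\geq 1$. The base case is $i=a_1-1$: then $f\in\cD^{a_1-1}(\cI)$, and I claim $v(f)\geq 1/a_1=v(x_1)$. This should follow because $V(x_1)\supseteq V(\cD^{a_1-1}(\cI))$ \emph{schematically} (the Observation on maximal contact), so $f$ lies in the ideal $(x_1)$ locally, forcing $v(f)\ge v(x_1)$; more carefully, $x_1$ itself is the ``worst'' element of $\cD^{a_1-1}(\cI)$ as far as $v$ is concerned, by the maximal-contact normal form. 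Step 2: descend from $i$ to $i-1$ using $\cD^i(\cI)=\cD^1(\cD^{i-1}(\cI))$ together with the general fact that for a monomial valuation $v$ and a first-order operator $\nabla$, $v(\nabla g)\ge v(g)-\max_j v(x_j)=v(g)-1/a_1$ when all the relevant coordinate weights are $\le 1/a_1$ — but here one must be careful since $v(x_i)=1/a_i$ may exceed $1/a_1$. So instead I would argue the bound along $V(x_1)$ by induction on dimension: $f\bmod x_1\in\cI[x_1]$-type ideals (more precisely, the image of $\cD^i(\cI)$ in $\cO_{V(x_1)}$ is controlled by the coefficient-ideal construction), apply the inductive hypothesis for $v_{J_{\cI[x_1]}}$, then correct by the $(a_1-1)!$ rescaling, and separately handle the $x_1$-adic part using Step 1's base case. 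Step 3: the coefficient-ideal bound $v(f)\ge (a_1-1)!$ for $f\in C(\cI,a_1)$: here $C(\cI,a_1)=\sum_{\sum i b_i\ge a_1!}\prod_{i=0}^{a_1-1}(\cD^{a_1-i}(\cI))^{b_i}$, and using Step 1 on each factor, $v\bigl(\prod(\cD^{a_1-i}(\cI))^{b_i}\bigr)\ge\sum b_i\cdot\frac{i}{a_1}=\frac{1}{a_1}\sum i b_i\ge\frac{a_1!}{a_1}=(a_1-1)!$; then one checks the restriction to $V(x_1)$ interacts correctly with $v$, i.e.\ $v_{J_\cI}$ restricted to $C(\cI,a_1)|_{V(x_1)}=\cI[x_1]$ agrees with $(a_1-1)!\cdot v_{J_{\cI[x_1]}}$ on $\cI[x_1]$, which is admissible for $\cI[x_1]$ by the inductive hypothesis — giving $(a_1-1)!$ as claimed.

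\textbf{Main obstacle.} The delicate point is Step 2: showing $v(\nabla g)\ge v(g)-1/a_1$ for $g\in\cD^{i-1}(\cI)$, because the coordinates $x_2,\ldots,x_k$ carry weights $1/a_i$ that are larger than $1/a_1$ (since invariants are ordered with truncation largest, one expects $a_i\le a_1$ in the relevant regime, but this needs justification and is not automatic from the definition alone). The clean resolution is to not differentiate in the $x_i$-directions directly on $Y$, but to reduce everything to $V(x_1)$ and invoke the dimension induction there, where the analogous statement for $v_{J_{\cI[x_1]}}$ is the inductive hypothesis — the only genuinely one-variable computation being the $x_1$-adic estimate $v(f)\ge 1/a_1$ for $f\in\cD^{a_1-1}(\cI)$, which is where maximal contact does the real work. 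I would also need the elementary lemma that for a monomial valuation and the coefficient ideal, restriction to a coordinate hyperplane is compatible with the valuation in the expected way; this is routine but should be stated explicitly. Finally, a bookkeeping check that the base case $i=0$ of Step 1 indeed recovers $v_{J_\cI}(f)\ge 1$ for all $f\in\cI$, closing the induction and proving admissibility.
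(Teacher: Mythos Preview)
Your Step 3 is correct, and your instinct to run an induction on dimension via restriction to $V(x_1)$ is exactly the paper's approach. But your Step~1 base case contains a genuine error: the containment $V(x_1)\supseteq V(\cD^{a_1-1}(\cI))$ translates to $(x_1)\subseteq\cD^{a_1-1}(\cI)$, not the reverse, so it does \emph{not} force $f\in(x_1)$ for $f\in\cD^{a_1-1}(\cI)$. Concretely, for $\cI=(x^2+y^3)$ one has $y^2\in\cD^1(\cI)\setminus(x)$, yet $v_J(y^2)=2/3\ge 1/2$ still holds---just not for the reason you gave. Your Step~2 then tries to recover by reducing to $V(x_1)$ and dimension induction, which is the right move, but you still invoke ``Step~1's base case'' for the $x_1$-adic part, so the argument remains circular.

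The paper's argument is shorter and bypasses the downward induction on $i$ entirely: Taylor-expand any $f\in\cD^i(\cI)$ as $f=\sum_{j\ge0}c_j\,x_1^j$ with $c_j=\tfrac{1}{j!}(\partial_{x_1}^{\,j}f)|_{x_1=0}\in\cD^{i+j}(\cI)|_{V(x_1)}$. For $i+j\ge a_1$ the bound $v_J(c_jx_1^j)\ge j/a_1\ge 1-i/a_1$ is automatic. For $i+j<a_1$, the power $c_j^{\,a_1!/(a_1-i-j)}$ lies in $C(\cI,a_1)|_{V(x_1)}=\cI[x_1]$, and the \emph{dimension} induction (admissibility of $J_{\cI[x_1]}$ for $\cI[x_1]$) plus the rescaling $v_J|_{V(x_1)}=(a_1-1)!\,v_{J_{\cI[x_1]}}$ give $v_J(c_j)\ge(a_1-i-j)/a_1$, hence $v_J(c_jx_1^j)\ge 1-i/a_1$. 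One expansion handles all $i$ simultaneously; there is no separate base case to establish.
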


To see this, expand  an element $f\in \cI$ as $$f= c_{0} + c_1 x_1  + \cdots +  c_{a_1-1} x_1^{a_1-1} +  c_{a_1}x_1^{a_1} +\cdots,$$ where  $$c_i = \frac{1}{i!}\left(\frac{\partial^if}{\partial x_1^i}\right )_{x_1 = 0}\  \in\  \cD^{i}(\cI)|_{V(x_1)},$$ and use induction again. 

The following is the real reason the center is well-defined:

\begin{proposition}\label{Prop:center-unique} $J$ is the unique center of maximal invariant which is admissible for $\cI$.
\end{proposition}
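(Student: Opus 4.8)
The plan is to show that any center $J'$ of maximal invariant admissible for $\cI$ must equal $J_\cI$, by induction on $\dim Y$, tracking the invariant entry by entry. First I would fix $p$ in the maximal locus and observe that admissibility of $J'$ for $\cI$ forces $v_{J'}(f) \geq 1$ for all $f \in \cI$; applying a differential operator of order $a_1 - 1$ to a well-chosen $f \in \cI$ (one whose order at $p$ is exactly $a_1$, so that $\cD^{a_1-1}$ of it generates the unit ideal) shows that the weighted order of $J'$ along the maximal contact element cannot exceed $1/a_1$, hence the first weight $a_1'$ of $J'$ satisfies $a_1' \leq a_1$; since $J'$ has \emph{maximal} invariant and $\inv_\cI$ is a singularity invariant with $a_1 = \ord_\cI$ as its first coordinate, maximality forces $a_1' = a_1$, and moreover forces the first transformed variable of $J'$ to cut out a maximal contact hypersurface $H'$ at $p$.

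Next I would restrict to $H'$ and compare. The key point is the preceding Lemma: admissibility of $J'$ for $\cI$ implies $v_{J'}(g) \geq (a_1-1)!$ for every $g \in C(\cI, a_1)$, and hence, restricting to $H'$, that the ``residual'' part of $J'$ is admissible for $\cI[x_1'] = C(\cI,a_1)|_{H'}$ after the rescaling by $(a_1-1)!$. By the inductive hypothesis applied on $H'$, the unique admissible center of maximal invariant for $\cI[x_1']$ is $J_{\cI[x_1']}$; combined with the definition of $\inv_\cI$ and $J_\cI$ via this restricted data, this pins down the remaining weights $(a_2,\ldots,a_k)$ and the remaining transformed variables of $J'$ to agree with those of $J_\cI$. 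One must check that a maximal contact hypersurface for $\cI$ at $p$ is also a maximal contact hypersurface for the data produced by the induction, and that the coefficient ideal is formed compatibly — this is exactly the content already used in proving that $\inv_\cI$ is a well-defined singularity invariant, so it can be invoked.

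The subtle point — and the main obstacle — is the independence from the choice of maximal contact: a priori $J'$ might use a different maximal contact element $x_1'$ than the one used to build $J_\cI$, so that ``restrict to $H'$'' and ``restrict to $H = V(x_1)$'' give a priori different inductive problems. The resolution is that any two maximal contact hypersurfaces at $p$ are related by a formal (in fact étale-local) isomorphism preserving the relevant data — more precisely, the coefficient ideal is constructed so that its restriction to any maximal contact hypersurface carries the same invariant, and one shows the centers match under the comparison isomorphism. This uses the characteristic-$0$ input (existence and the étale-local homogeneity of maximal contact) in an essential way, and is where functoriality of $\inv$ under smooth morphisms does the real work. Once this comparison is in place, uniqueness of $J'$ follows formally: its invariant equals that of $J_\cI$ by maximality, and its defining data is forced to agree with $J_\cI$ coordinate by coordinate.
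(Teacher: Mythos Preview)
The paper does not prove this proposition: immediately after stating it, the author writes ``I will not go into the proof of this, though it is not difficult.'' So there is no proof in the paper to compare against.

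Your outline is the standard inductive approach and is broadly correct. One step you pass over too quickly is the assertion that maximality ``forces the first transformed variable of $J'$ to cut out a maximal contact hypersurface.'' What you actually get from admissibility and the Lemma preceding the proposition is that $v_{J'}(x_1)\geq 1/a_1$ for \emph{your} chosen maximal contact element $x_1$; this says $x_1$ lies in the valuation ideal determined by $J'$, not that the first parameter $y_1$ of $J'$ is itself maximal contact. The cleaner way to run the induction is therefore to show that, after a change of the defining parameters of $J'$ which does not alter the center, one may take $y_1=x_1$; then restrict to $V(x_1)$ and invoke the inductive hypothesis there directly, without needing a separate maximal contact hypersurface $H'$. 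This sidesteps the ``independence of maximal contact'' issue you flag at the end, since both $J_\cI$ and $J'$ are now being compared on the \emph{same} hypersurface $V(x_1)$. The homogeneity/\'etale-local comparison of different maximal contacts is then only needed for well-definedness of $\inv_\cI$ and $J_\cI$ themselves, which the paper already takes as input.
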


I will not go into the proof of this, though it is not difficult.

\begin{remark} One may define an invariant in positive characteristic in an analogous manner, by taking the unique  of maximal invariant which is admissible for $\cI$.
This leads to an invariant which does not satisfy  \ref{Lem:criterion}(a) --- this fails for Whitney's umbrella in characteristic 2 --- neither it is a smooth invariant, as one can see from the system $(x^2 + y^2z, z^{100})$. These issues are again raised in \cite{Wlodarczyk-cobordant}.
\end{remark}
\subsection{Reduced center and transform}

The center $J_\cI := (x_1^{a_1},x_2^{a_2},\ldots,x_k^{a_k})$ is not in the form we used for weighted blowups. There is, however, a positive integer $\ell$ such that $$\oJ_\cI := J_\cI^{1/\ell} = (x_1^{1/w_1},\ldots,x_k^{1/w_k}),$$ with $w_i$ positive integers. We use this rescaled center $\oJ_\cI$ to prove Theorem \ref{Th:resolution}. 

We note that the associated monomial valuations satisfy $v_\oJ = \ell \cdot v_J$. 


We have already shown  condition \ref{Lem:criterion}(a) above. In order to consider the remaining condition \ref{Lem:criterion}(b), we need to look at the transform of $\cI$.

Recall the change of variables $x_i = s^{w_i} x_i'$ for $i=1,\ldots,k$.  Write $\sigma: B \to Y$.

We have that if $f(x_1,\ldots,x_n) \in \cI$ then $\sigma^*f = f(s^{w_1} x_1',\ldots, s^{w_k} x_k', x_{k+1}\ldots,x_k)$. It follows from admissibility that $\sigma^*f = s^\ell f'$ for some regular function $f'$ --- use that $a_i w_i = \ell$. 

If instead $f \in \cD^i(\cI)$ then we can always write $\sigma^*f = s^{\ell - iw_1} f'$ for some regular function $f'$ --- we see why below. This implies a similar result for the coefficient ideal, where $\ell$ is replaced by $(a-1)!\cdot\ell$.

\subsection{Transforms of derivatives and coefficient ideals}
For an ideal $\cI$ denote by $\cI'$ its controlled transform $\{f': f\in \cI\}$.

\begin{lemma}[See {\cite[Lemma 4.5]{Encinas-Villamayor-good},
\cite[Lemma 2.6.2]{Wlodarczyk}, 
\cite[Lemma 3.3]{Bierstone-Milman-funct}}] \label{Lem:transform-derivative} $(\cD(\cI))' \subset \cD(\cI')$.
\end{lemma}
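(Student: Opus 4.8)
The plan is to reduce the statement to a local computation with first-order differential operators and to track the controlled transform through the change of variables $x_i = s^{w_i} x_i'$ used in the deformation to the weighted normal cone. Since in characteristic $0$ one has $\cD = \cD^1$ generated by $\partial/\partial x_1,\ldots,\partial/\partial x_n$ over $\cO_Y$, and since $\cD(\cI)$ is generated by $\cI$ together with $\{\partial f/\partial x_j : f\in \cI, 1\le j\le n\}$, it suffices to show that for each generator $\partial f/\partial x_j$ of $\cD(\cI)$, its controlled transform lies in $\cD(\cI')$. Here $\cI' = \{f' : f\in\cI\}$ with $\sigma^* f = s^{\ell} f'$ by admissibility of $J_\cI$ (Lemma on admissibility), so $\cI'$ is genuinely an ideal on $B$ (away from the vertex, or as a $\GG_m$-equivariant ideal).

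First I would fix $f\in\cI$ and compute $\sigma^*(\partial f/\partial x_j)$ via the chain rule. For $j\le k$ we have $\partial/\partial x_j = s^{-w_j}\,\partial/\partial x_j'$ when acting on $\sigma^* g = g(s^{w_1}x_1',\ldots)$, so
\begin{align}
\sigma^*\!\left(\frac{\partial f}{\partial x_j}\right) &= s^{-w_j}\,\frac{\partial}{\partial x_j'}\bigl(\sigma^* f\bigr) = s^{-w_j}\,\frac{\partial}{\partial x_j'}\bigl(s^{\ell} f'\bigr) = s^{\ell - w_j}\,\frac{\partial f'}{\partial x_j'}.
\end{align}
For $j > k$ the variable $x_j$ is untouched by $\sigma$, and $\sigma^*(\partial f/\partial x_j) = \partial(\sigma^* f)/\partial x_j = s^{\ell}\,\partial f'/\partial x_j$. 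In either case the controlled transform $(\partial f/\partial x_j)'$ — obtained by dividing $\sigma^*(\partial f/\partial x_j)$ by the appropriate power of $s$ dictated by the control, which for a first derivative is $s^{\ell - w_j}$ (respectively $s^{\ell}$, consistent with the stated rule $\sigma^* g = s^{\ell - i w_1}g'$ specialized to $i=1$ and the relevant variable) — equals $\partial f'/\partial x_j'$ up to a unit, hence lies in $\cD(\cI')$ since $f'\in\cI'$. Summing over generators gives $(\cD(\cI))'\subset\cD(\cI')$.

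There are two points that need care. The first is bookkeeping of the power of $s$: the controlled transform is defined with respect to a prescribed control (the exceptional divisor to the power equal to the expected order), and one must check that the power of $s$ appearing in $\sigma^*(\partial f/\partial x_j)$ is at least the control power, so that $(\partial f/\partial x_j)'$ is actually regular and the division makes sense — this is exactly the content of the remark in the excerpt that "if $f\in\cD^i(\cI)$ then $\sigma^* f = s^{\ell - iw_1}f'$ for some regular $f'$", applied with $i=1$; the worst case is $w_j = w_1 = \max_i w_i$, so $\ell - w_j \ge \ell - (a_1-1)!\cdot(\text{something})$ stays nonnegative by the relations $a_i w_i = \ell$. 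The second, and I expect this to be the main obstacle to state cleanly, is that $\cD(\cI')$ must be computed on $B$ using the transformed coordinate system $(s, x_1',\ldots,x_k',x_{k+1},\ldots,x_n)$, and one needs $\partial/\partial x_i'$ to be among the admissible derivations defining $\cD$ on $B$ — which it is, since these together with $\partial/\partial s$ form a coordinate system on $B$ (or on the charts $B_i$), and $\cD(\cI')$ is insensitive to which coordinate system one uses because $\cD^1$ is intrinsically the sheaf of order-$\le 1$ operators. The cleanest route is therefore to observe that the inclusion $(\cD\cI)'\subset\cD(\cI')$ is local and étale-local, reduce to an affine chart where $\sigma$ is the explicit monomial substitution above, and then the chain-rule computation closes the argument; I would cite the referenced lemmas of Encinas--Villamayor, W{\l}odarczyk, and Bierstone--Milman for the fully rigorous version rather than reproduce it.
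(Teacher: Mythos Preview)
Your approach is the same as the paper's --- a chain-rule computation in the coordinates $(s,x_1',\ldots,x_k',x_{k+1},\ldots,x_n)$ --- and your displayed identity $\sigma^*(\partial f/\partial x_j)=s^{\ell-w_j}\,\partial f'/\partial x_j'$ is correct. But the bookkeeping of the controlled transform is muddled in a way that matters.

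The controlled transform of $\cD(\cI)$ is obtained by dividing by a \emph{single} fixed power of $s$, namely $s^{\ell-w_1}$ (this is what the rule ``$\sigma^*g=s^{\ell-iw_1}g'$'' says for $i=1$). You instead divide $\partial f/\partial x_j$ by $s^{\ell-w_j}$, a power depending on $j$; that is not a well-defined transform of an ideal, and it leads you to the false claim that $(\partial f/\partial x_j)'$ equals $\partial f'/\partial x_j'$ ``up to a unit''. With the correct uniform control one gets
\[
(\partial f/\partial x_j)'\;=\;s^{-(\ell-w_1)}\,\sigma^*(\partial f/\partial x_j)\;=\;s^{\,w_1-w_j}\,\frac{\partial f'}{\partial x_j'},
\]
which is \emph{not} a unit multiple of $\partial f'/\partial x_j'$ when $w_j<w_1$. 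The inclusion $(\cD\cI)'\subset\cD(\cI')$ then holds precisely because $w_1\ge w_j$ for all $j$ (so $s^{w_1-w_j}$ is regular), and this inequality is the actual content of the lemma. You allude to this (``the worst case is $w_j=w_1=\max_i w_i$'') but never use it, because your incorrect per-generator control makes the inequality look unnecessary.

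The paper also makes one clean reduction you hedge on: it is enough to land in $\cD_{B/\AA^1}(\cI')$, i.e.\ to use only the relative derivations $\partial/\partial x_i'$ and ignore $\partial/\partial s$ entirely. This sidesteps your second ``point of care'' about which derivations to allow on $B$.
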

\begin{proof} It suffices to show that $(\cD(\cI))' \subset \cD_{B/\AA^1}(\cI')$.

As indicated above, $\cI' = (s^{-\ell} f(s^{w_j}x_j) \ :\ f\in \cI\,)$.

It follows that $\cD_{B/\AA^1}(\cI') = \left(\frac{\partial}{\partial x'_i}(s^{-\ell} f(s^{w_j} x_j')) \mid f\in \cI, i=1,\ldots,n\right) $, and applying the chain rule this equals $ \left(s^{-\ell} \frac{\partial f}{\partial x_i} (s^{w_j} x_j') \cdot s^{w_i}\right)$, giving 
$${\cD_{B/\AA^1}(\cI') = \left(s^{-\ell+w_i} \frac{\partial f}{\partial x_i} (s^{w_j} x_j'))\right)}.$$ 

On the other hand $\cD(\cI) = \left(\frac{\partial f}{\partial x_i}(x_i)\right)$ giving
$$\cD(\cI)' = \left(s^{-\ell+w_1}\frac{\partial f}{\partial x_i}(s^{w_j}x_j')\right).$$
Note that $w_1\geq w_i$ for all $i$, so
		$$\cD(\cI)' = \left(s^{-\ell+w_1}\frac{\partial f}{\partial x_i}(s^{w_j}x_j')\right) \subset \left(s^{-\ell+w_i} \frac{\partial f}{\partial x_i} (s^{w_j} x_j'))\right)=\cD_{B/\AA^1}(\cI'),$$
		as  needed.

\end{proof}


\begin{proposition}\label{Prop:transform-coefficient}  $\cI'^{(a_1-1)!} \subset C(\cI,a_1)' \subset C(\cI',a_1).$\end{proposition}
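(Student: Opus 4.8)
The proposition has two inclusions; the first is a general fact about coefficient ideals, the second is the substantive one. For the first inclusion, $\cI'^{(a_1-1)!} \subset C(\cI,a_1)'$: by definition $C(\cI,a_1) = \sum_{\sum i b_i \geq a_1!}\prod_i (\cD^{a_1-i}(\cI))^{b_i}$, and one of the summands (taking $b_0 = a_1!/a_1 = (a_1-1)!$, all other $b_i = 0$) is $\cI^{(a_1-1)!}$; hence $\cI^{(a_1-1)!} \subset C(\cI,a_1)$, and applying the controlled transform $(\cdot)'$, which is multiplicative on products of functions up to the appropriate power of $s$ (here $\sigma^*$ of an element of $\cI^{(a_1-1)!}$ carries $s^{(a_1-1)!\cdot\ell}$, matching the normalization of $C(\cI,a_1)'$), gives $\cI'^{(a_1-1)!} \subset C(\cI,a_1)'$. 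I would spell out the bookkeeping of the $s$-powers here since it is the one place a factor can go wrong.

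The second inclusion $C(\cI,a_1)' \subset C(\cI',a_1)$ is the real content, and it follows from Lemma \ref{Lem:transform-derivative} by iteration and the multiplicativity of the controlled transform. First I would upgrade Lemma \ref{Lem:transform-derivative} to all orders: $(\cD^{i}(\cI))' \subset \cD^{i}(\cI')$, proved by induction on $i$ using $\cD^{i} = \cD^1\cD^{i-1}$ in characteristic $0$ and the fact that $(\cdot)'$ commutes with restriction and is compatible with the already-established order-one case — this is exactly the chain-rule computation in the proof of \ref{Lem:transform-derivative}, which in fact proves the stronger statement about $\cD^1$ of any ideal, so applying it to $\cD^{i-1}(\cI)$ and invoking the inductive hypothesis $(\cD^{i-1}(\cI))'\subset\cD^{i-1}(\cI')$ closes the induction. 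Then, given a summand $\prod_{i=0}^{a_1-1}(\cD^{a_1-i}(\cI))^{b_i}$ of $C(\cI,a_1)$ with $\sum i b_i \geq a_1!$, its controlled transform is a product of the controlled transforms $(\cD^{a_1-i}(\cI))^{b_i}{}'$ — here one must again track that the total $s$-exponent contributed equals $\sum_i b_i(\ell - (a_1-i)w_1)$ or the appropriate bound, as in the remark preceding the section, so that after dividing by the single normalizing power $s^{(a_1-1)!\cdot\ell}$ what remains lies in $\prod_i(\cD^{a_1-i}(\cI'))^{b_i}$, which is one of the defining summands of $C(\cI',a_1)$ since the exponent condition $\sum i b_i \geq a_1!$ is unchanged.

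**The main obstacle.** The bookkeeping of powers of $s$ is where the argument has to be done carefully: the controlled transform $(\cdot)'$ is not literally multiplicative as a map on ideals, only "multiplicative up to the correct power of $s$", and the correct power is different for $\cI$ (namely $\ell$), for $\cD^i(\cI)$ (namely $\ell - i w_1$, as stated in the subsection on transforms of derivatives), and for $C(\cI,a_1)$ (namely $(a_1-1)!\cdot\ell$). The content of the proposition is precisely that these normalizations are mutually consistent, so the heart of the write-up is to verify that for a monomial $\prod_i(\cD^{a_1-i}(\cI))^{b_i}$ appearing in $C(\cI,a_1)$ the sum of the individual normalizing exponents is at least $(a_1-1)!\cdot\ell$, using $\sum_i i b_i \geq a_1!$ and $w_1 \geq w_i$; this is an elementary inequality but it is the step I expect to occupy most of the proof. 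The inductive extension of Lemma \ref{Lem:transform-derivative} to higher order derivatives is routine once the order-one case is in hand.

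\begin{proof}
For the first inclusion, note that taking $b_0 = (a_1-1)!$ and $b_i = 0$ for $i>0$ gives a summand $\cI^{(a_1-1)!}$ of $C(\cI,a_1)$, so $\cI^{(a_1-1)!}\subset C(\cI,a_1)$; applying the controlled transform and keeping track of the normalizing power $s^{(a_1-1)!\cdot \ell}$ yields $\cI'^{(a_1-1)!}\subset C(\cI,a_1)'$.

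For the second inclusion, we first claim $(\cD^i(\cI))' \subset \cD^i(\cI')$ for all $i$, by induction on $i$. The case $i=1$ is Lemma \ref{Lem:transform-derivative}, whose proof in fact establishes the inclusion $(\cD^1(\cK))'\subset \cD^1_{B/\AA^1}(\cK')$ for any ideal $\cK$, with the appropriate power of $s$. For the inductive step, in characteristic $0$ we have $\cD^i(\cI) = \cD^1(\cD^{i-1}(\cI))$; applying the order-one statement to $\cK = \cD^{i-1}(\cI)$ and using the inductive hypothesis $(\cD^{i-1}(\cI))'\subset \cD^{i-1}(\cI')$ gives $(\cD^i(\cI))' \subset \cD^1((\cD^{i-1}(\cI))') \subset \cD^1(\cD^{i-1}(\cI')) = \cD^i(\cI')$, as claimed.

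Now fix a summand $\prod_{i=0}^{a_1-1}(\cD^{a_1-i}(\cI))^{b_i}$ of $C(\cI,a_1)$, so that $\sum_i i b_i \geq a_1!$. Its controlled transform is obtained by pulling back a product of $b_i$ elements of $\cD^{a_1-i}(\cI)$ for each $i$ and dividing by the normalizing power $s^{(a_1-1)!\cdot\ell}$. Each element of $\cD^{a_1-i}(\cI)$ pulls back with a factor $s^{\ell - (a_1-i)w_1}$ available (as in the subsection on transforms of derivatives, using $w_1\geq w_j$ for all $j$), so the total available power of $s$ is $\sum_i b_i(\ell - (a_1-i)w_1) = \ell\sum_i b_i - w_1\sum_i b_i(a_1-i)$. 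Since $\sum_i i b_i \geq a_1!$ and $a_1 w_1 = \ell$, one checks $\sum_i b_i(\ell - (a_1-i)w_1) \geq (a_1-1)!\cdot\ell$, so after dividing by $s^{(a_1-1)!\cdot\ell}$ the resulting function lies in $\prod_i (\cD^{a_1-i}(\cI)')^{b_i} \subset \prod_i (\cD^{a_1-i}(\cI'))^{b_i}$ by the claim. The exponent condition $\sum_i i b_i \geq a_1!$ is unchanged, so this product is one of the defining summands of $C(\cI',a_1)$. Summing over all summands gives $C(\cI,a_1)' \subset C(\cI',a_1)$.
\end{proof}
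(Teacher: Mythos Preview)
Your proof is correct and follows the same route as the paper's, which is extremely terse: the paper simply says that $\cI^{(a_1-1)!}\subset C(\cI,a_1)$ implies the first inclusion upon taking transforms, and that Lemma~\ref{Lem:transform-derivative} implies the second. You have supplied exactly the details the paper suppresses --- the induction extending the lemma to $(\cD^i(\cI))'\subset\cD^i(\cI')$, and the verification that the $s$-exponents match (your computation $\sum_i b_i(\ell-(a_1-i)w_1)=w_1\sum_i ib_i\geq w_1\,a_1!=(a_1-1)!\cdot\ell$ is exactly right).

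One small indexing slip: in the definition $C(\cI,a)=\sum_{\sum ib_i\geq a!}\prod_{i}(\cD^{a-i}(\cI))^{b_i}$, the index $i$ is the \emph{weight}, so the summand $\cI^{(a-1)!}$ arises from $i=a$ (since $\cD^0(\cI)=\cI$) with $b_a=(a-1)!$, not from $b_0$; with $b_0$ alone one has $\sum ib_i=0$. The paper's own index range $i=0,\ldots,a-1$ appears to be a typo (compare the remark immediately following the definition, where $i$ runs from $1$ to $a$), so this is entirely forgivable and does not affect your argument.
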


\begin{proof}
 The inclusion $\cI^{(a_1-1)!} \subset C(\cI,a_1)$ implies an inclusion of transforms $\cI'^{(a_1-1)!} \subset C(\cI,a_1)' $.
 
 Lemma \ref{Lem:transform-derivative} above implies the second inclusion.
\end{proof} 

We note that the invariant and center of $\cI'^{(a_1-1)!}$ equals that of  $C(\cI',a_1).$ Proposition \ref{Prop:transform-coefficient}  says that this coincides with the invariant of $C(\cI,a_1)'$. In other words: to check the resolution criterion for $\cI$ it suffices to check it for $C(\cI,a_1).$

This allows us to conflate $\cI$ with its coefficient ideal. I'll ignore the rescaling factor that this requires. After all, I am telling you \emph{how} it works, the complete proofs are detailed elsewhere.

\subsection{Order and  maximal contact element on $B$}

\begin{claim} $\max_B \ord_{\cI'} = a_1$ and wherever this is attained, $x_1'$ is a maximal contact element.\end{claim}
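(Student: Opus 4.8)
The plan is to prove the statement in three steps: (i) $\ord_{\cI'}\le a_1$ at every point of $B$; (ii) wherever $\ord_{\cI'}=a_1$, the function $x_1'$ satisfies both defining conditions of a maximal contact element; and (iii) the value $a_1$ is actually attained. Note first that $B$ is smooth: eliminating $x_1,\dots,x_k$ via $x_i=s^{w_i}x_i'$ identifies it with $\AA^{n+1}$, with coordinate functions $s,x_1',\dots,x_k',x_{k+1},\dots,x_n$; so $\ord$ and maximal contact make sense on $B$ as usual, and $\cD^1(x_1')=\cO_B$ because $x_1'$ is a coordinate. The two ingredients are the iterated form of Lemma~\ref{Lem:transform-derivative}, $(\cD^i(\cI))'\subseteq\cD^i(\cI')$ (its proof applies to each $\cD^i(\cI)$ with its natural weight $\ell-iw_1$, which is where $w_1\ge w_i$ is used), and the $s$-power bookkeeping coming from $a_iw_i=\ell$: for $f\in\cI$ one has $\sigma^*f=s^\ell f'$, the controlled transform of $\cD^{a_1-1}(\cI)$ divides out $s^{\ell-(a_1-1)w_1}=s^{w_1}$, and that of $\cD^{a_1}(\cI)$ divides out $s^{\ell-a_1w_1}=s^0$.

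For (i): since $V(\oJ)$ is the maximal locus of $\ord_\cI$ we have $a_1=\max_Y\ord_\cI$, i.e.\ $\cD^{a_1}(\cI)=\cO_Y$; as $1\in\cD^{a_1}(\cI)$ and its controlled transform is $1'=s^0\sigma^*1=1$, we get $(\cD^{a_1}(\cI))'=\cO_B$, whence $\cD^{a_1}(\cI')=\cO_B$ and $\ord_{\cI'}\le a_1$ everywhere. For (ii): maximal contact of $x_1$ for $\cI$ gives $x_1\in\cD^{a_1-1}(\cI)$ locally; since $\sigma^*x_1=s^{w_1}x_1'$ and the controlled transform of $\cD^{a_1-1}(\cI)$ divides out exactly $s^{w_1}$, we obtain $(x_1)'=x_1'$, hence $x_1'\in(\cD^{a_1-1}(\cI))'\subseteq\cD^{a_1-1}(\cI')$ throughout $B$. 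Together with $\cD^1(x_1')=\cO_B$ this says exactly that $x_1'$ is a maximal contact element at any point where $\ord_{\cI'}=a_1$.

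For (iii): on $\{s\ne0\}\subseteq B$ the substitution $x_i'=x_i/s^{w_i}$ identifies this locus with $Y\times\GG_m$, with $\sigma$ becoming the projection to $Y$; since $s^{-\ell}$ is then a unit, $\cI'|_{\{s\ne0\}}=\sigma^*\cI$, and functoriality of $\ord$ for the smooth morphism $Y\times\GG_m\to Y$ gives $\ord_{\cI'}(p,s)=\ord_\cI(p)=a_1$ for every $s\ne0$. With (i) this yields $\max_B\ord_{\cI'}=a_1$, and with (ii) the claim follows. The only nonformal point --- the main obstacle --- is the first ingredient: one must check that $(\cD^i(\cI))'\subseteq\cD^i(\cI')$ iterates correctly once each controlled transform is taken with the weight $s^{\ell-iw_1}$, so that $(x_1)'$ is literally $x_1'$ and the transform of the unit ideal $\cD^{a_1}(\cI)$ is literally $\cO_B$; the rest is functoriality and the explicit change of variables.
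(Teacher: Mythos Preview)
Your proof is correct and follows essentially the same route as the paper: the heart of both is the chain-rule computation behind Lemma~\ref{Lem:transform-derivative} together with the identity $a_1w_1=\ell$, which forces $(\cD^{a_1}(\cI))'=\cO_B$ and sends $x_1\in\cD^{a_1-1}(\cI)$ to $x_1'\in\cD^{a_1-1}(\cI')$. The only packaging differences are that the paper first replaces $\cI$ by its coefficient ideal so it can work with the single operator $\partial_{x_1}$ applied to one witness $f$ (rather than iterating the lemma with all of $\cD$), and that you make the attainment step (iii) explicit whereas the paper leaves it implicit from functoriality on $\{s\neq 0\}=Y\times\GG_m$.
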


For this discussion we assume $\cI$ is already a coefficient ideal, so in particular $\partial_{x_1}^{a_1}(\cI) = \cO$ and $x_1 \in \partial_{x_1}^{a_1-1}(\cI)$. This is harmless by the previous proposition. (It can also be arranged by a change of variables.)

This means that there is an element $f\in \cI$ such that  $x_1 = \partial_{x_1}^{a_1-1}(f)$ and so $\partial_{x_1}^{a_1}(f)=1$.

Plugging in we obtain that $\partial_{x_1'}^{a_1}(s^{-\ell} f(s^{w_i} x_i'))=1$, since $a_1w_1 = \ell$. This means that the order everywhere is at most $a_1$. Where it is, we indeed get  $x_1 = s^{-\ell+w_1} \partial_{x_1'}^{a_1-1}(f(s^{w_i} x_i'))$ so $x_1' = \partial_{x_1'}^{a_1-1}(s^{-\ell}f(s^{w_i} x_i'))$. So it is a maximal contact element.

\subsection{The invariant of $\cI'$} 
\begin{claim} $\max_B \inv_{\cI'} = (a_1,\ldots,a_k)$\end{claim}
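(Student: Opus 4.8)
The plan is to run the same induction on $\dim Y$ that defines the invariant, using the previous two claims as the base step at the top variable. First I would recall that, by the discussion following Proposition \ref{Prop:transform-coefficient}, we may assume $\cI$ is already a coefficient ideal, so that the claim just proved applies: $\max_B \ord_{\cI'} = a_1$ and, at every point where this maximum is attained, $x_1'$ is a maximal contact element for $\cI'$. By the very definition of $\inv$, it therefore remains to compute $\inv_{\cI'[x_1']}$, i.e.\ to show that the restriction of (the coefficient ideal of) $\cI'$ to $V(x_1')$ has invariant $(a_2,\ldots,a_k)$, whereupon $\inv_{\cI'} = (a_1,a_2,\ldots,a_k)$ as claimed, with equality of maxima.

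The heart of the argument is the identification of $\cI'[x_1']$ with the transform of $\cI[x_1]$ under the induced weighted blowup on the maximal contact hypersurface. Concretely: $V(x_1) \subset Y$ carries the coefficient ideal $\cI[x_1] = C(\cI,a_1)|_{V(x_1)}$, and the center $J_{\cI[x_1]} = (\ox_2^{b_2},\ldots,\ox_k^{b_k})$ on $V(x_1)$ has reduced form $(\ox_2^{1/w_2},\ldots,\ox_k^{1/w_k})$ — the same weights $w_i$ appearing in $\oJ_\cI$, since $a_i = b_i/(a_1-1)!$ and the overall rescaling $\ell$ is chosen compatibly. The deformation $B(V(x_1))$ to its weighted normal cone is exactly $V(x_1') \subset B$ (the hypersurface $x_1 = s^{w_1}x_1'$ restricts, on the slice $x_1'=$ finite, to the blowup of $V(x_1)$ along the smaller center). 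Under this identification one checks, using Proposition \ref{Prop:transform-coefficient} and Lemma \ref{Lem:transform-derivative} applied on $V(x_1)$, that $C(\cI',a_1)|_{V(x_1')}$ and $(\cI[x_1])'$ have the same invariant and center. Then the inductive hypothesis $\inv_{(\cI[x_1])'} \le \inv_{\cI[x_1]} = (b_2,\ldots,b_k)$ — with equality exactly when condition (b) holds downstairs — gives $\inv_{\cI'[x_1']} = (a_2,\ldots,a_k)$ after the $(a_1-1)!$ rescaling, and assembling yields $\max_B \inv_{\cI'} = (a_1,\ldots,a_k) = \max \inv_\cI$.

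The main obstacle I expect is the bookkeeping of rescaling factors: the coefficient ideal $C(\cI,a_1)$ has order $a_1!$ rather than $a_1$, the weights transform by $a_i w_i = \ell$, and the derivatives $\cD^i(\cI)$ transform with the shifted power $s^{\ell - iw_1}$ (Lemma \ref{Lem:transform-derivative}) rather than $s^{-\ell}$, so one must verify that all these shifts are precisely what is needed to make $C(\cI,a_1)|_{V(x_1)}$ transform to (a power of) $C(\cI',a_1)|_{V(x_1')}$ with matching normalization — i.e.\ that the inductive definition of the center on $V(x_1')$ reproduces $(x_2^{a_2},\ldots,x_k^{a_k})$ restricted. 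As the author has flagged (``I'll ignore the rescaling factor''), this is exactly the routine-but-delicate part one glosses in a survey; in the plan I would simply cite Proposition \ref{Prop:transform-coefficient}, invoke functoriality of $\inv$ for the smooth morphism $B \setminus V \to Bl_\oJ(Y)$ and for restriction to maximal contact, and defer the constant-tracking to \cite{ATW-weighted, ABTW-foliated}. A secondary point to state carefully is that $x_2,\ldots,x_k$ — chosen on $Y$ as arbitrary lifts of $\ox_i$ — pull back under $\sigma$ to valid lifts on $B$ of the transformed maximal-contact data on $V(x_1')$, which is immediate from $x_i = s^{w_i}x_i'$ but worth one sentence.
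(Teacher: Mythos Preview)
Your proposal is correct and follows essentially the same route as the paper: use the previous claim to get $\max\ord_{\cI'}=a_1$ with maximal contact $x_1'$, identify $H'=V(x_1')\subset B$ with the deformation to the weighted normal cone of $H=V(x_1)$ along $(\ox_2^{1/w_2},\ldots,\ox_k^{1/w_k})$, and then invoke the inductive hypothesis on $\cI[x_1]$ to obtain $\inv_{\cI'[x_1']}=(a_2,\ldots,a_k)$. The paper's version is terser---it simply writes ``ignoring annoying rescaling factors'' where you carefully flag the $(a_1-1)!$ and $s^{\ell-iw_1}$ bookkeeping and cite Proposition~\ref{Prop:transform-coefficient}---but the logical skeleton is identical.
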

We note that this implies  \ref{Lem:criterion}(b), hence resolution of singularities.

As we have seen, the maximal order is $a_1$ and it is attained within the hypersurface $H' = \{x_1'=0\}$, since it is maximal contact. Note that $H'$ is the degeneration to the normal cone of $H = \{x_1 = 0\}$, with center $\oJ_H = (x_2^{1/w_2},\ldots,x_k^{1/w_k})$.

We can now apply induction, so that, ignoring annoying rescaling factors, $(a_2,\ldots,a_k)$ is the maximal invariant of $\cI'[x_1']$ with center $J_{H'}=(x_2'^{a_2},\ldots,x_k'^{a_k})$, so by definition the maximal invariant if $\cI'$ is indeed $(a_1,\ldots,a_k)$, with center $(x_1'^{a_1},\ldots,x_k'^{a_k})$.
 
We are done!

\section{Enter foliations}
\subsection{Resolution of foliations}
A \emph{foliation} is a coherent subsheaf $\cF \subset \cD_X$ which is closed under Lie brackets. We also consider \emph{logarithmic foliations}   $\cF \subset \cD^{\log}_X$, for instance when $X$ carries a normal crossings divisor.
A foliation is \emph{smooth} if it is a subbundle of $\cD_X$, and logarithmically smooth if it is a subbundle of $\cD^{\log}_X$

One major question is that of resolution of singularities of foliations, sometimes called ``reduction of singularities" since smoothness is rarely achievable. There are contributions in the CIRM workshop which   tell us about the history (I need to study it myself). Some important names: Seidenberg \cite{Seidenberg}, Aroca \cite{Aroca-reduction}, Panazzolo \cite{Pan}, McQuillan--Panazzolo  \cite{McQPan}.

A  paper we wrote \cite{ATW-relative} is devoted to showing relative logarithmic resolution, which implies in particular log resolution of a foliation of the form $\cD_{X/B}$. 

Log smoothness after blowups is impossible already in 2-dimensional examples, as one sees with saddle-nodes.

\subsection{Resolution inside foliated manifolds}
All results I am involved in from here on are in \cite{ABTW-foliated}.

We focus on embedded resolution: say $(Y,\cF)$ is a foliated manifold and $X\subset Y$ is a subvariety. What can we do to resolve $X$ in a way that respects $\cF$?

There are two opposing cases of interest:
\begin{theorem}\label{Th:invariant}  Assume that $X$ is $\cF$-invariant, namely $\cF(\cI_X) \subset \cI_X$. Then the functorial resolution of $X$ described in previous sections is $\cF$-invariant: the foliation $\cF$ lifts in the resolution process to a foliation $\cF'$ on $Y'$, and $X'$ is $\cF'$-invariant. \end{theorem}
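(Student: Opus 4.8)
The plan is to run the \emph{unmodified} canonical algorithm of Theorem~\ref{Th:resolution} on $X\subset Y$ and to show, step by step, that $\cF$-invariance is preserved. A single step replaces $(Y,X)$ by $(Y',X')=(Bl_\oJ Y, Bl_\oJ X)$ for the canonical center $\oJ=\oJ_{\cI_X}$, so I would reduce the theorem to two assertions about one blowup: (i) $\cF$ lifts to a foliation $\cF'$ on $Y'$, and (ii) the strict transform $X'$ is $\cF'$-invariant. The crux is that $\cF$-invariance of $X$ forces $\cF$ to be \emph{tangent to the canonical center} $\oJ$, and the cleanest route to that is functoriality of the center under the infinitesimal flow of $\cF$.

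\textbf{Step 1 (tangency of $\cF$ to $\oJ$).} Fix $\xi\in\cF$ and form the first-order flow $\phi_\eps:=\id+\eps\xi$, an automorphism of $Y_\eps:=Y\times_k\Spec k[\eps]/(\eps^2)$ over $R:=k[\eps]/(\eps^2)$. Since $\cF(\cI_X)\subset\cI_X$ we get $\phi_\eps^\ast(\cI_X\otimes_k R)=\cI_X\otimes_k R$. The invariant $\inv$ and the center $J_\cI$ are built from $\ord$, maximal contact elements, and coefficient ideals, all compatible with the flat base change $k\hookrightarrow R$, so they commute with it; hence the canonical center of $(Y_\eps,\cI_X\otimes R)$ is $\oJ\otimes_k R$. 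By Proposition~\ref{Prop:center-unique} the center is the \emph{unique} admissible center of maximal invariant, so the automorphism $\phi_\eps$ must fix it: $\phi_\eps^\ast(\oJ\otimes R)=\oJ\otimes R$. Reading this through the monomial valuation $v:=v_\oJ$ extended to $\cO_Y\oplus\eps\cO_Y$ by $v(a+\eps b)=\min(v(a),v(b))$, the relation $v(f+\eps\,\xi f)=v(f)$ for all $f\in\cO_Y$ forces $v_\oJ(\xi f)\ge v_\oJ(f)$. Thus each $\xi\in\cF$ is tangent to $\oJ$; equivalently, writing $\oJ=(x_1^{1/w_1},\ldots,x_k^{1/w_k})$, we get $v_\oJ(\xi x_i)\ge w_i$ for $i\le k$. (One can avoid the nilpotent base by running the same argument with the genuine formal flow $\exp(t\xi)$ over $k[[t]]$, which exists precisely because $\cF(\cI_X)\subset\cI_X$ lets one iterate $\xi$, together with base-change functoriality over $k[[t]]$.)

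\textbf{Step 2 (lifting $\cF$; invariance of $X'$; iteration).} Let $\sigma\colon B\to Y$ be the deformation to the weighted normal cone of $\oJ$ with exceptional variable $s$, so $Y'=[B_+/\GG_m]$ with $B_+=B\setminus V$ and exceptional divisor $E=V(s)$. A vector field tangent to $\oJ$ lifts to $B$: writing $\sigma^\ast\xi$ in the coordinates $(s,x_1',\ldots,x_k',x_{k+1},\ldots,x_n)$, using $x_i=s^{w_i}x_i'$ and $v_\oJ(\xi x_i)\ge w_i$, I would check that $\sigma^\ast\xi$ is a regular, $\GG_m$-invariant vector field tangent to $E$, hence descends to a logarithmic vector field $\widetilde\xi$ on $Y'$. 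The map $\xi\mapsto\widetilde\xi$ is $\cO$-linear and bracket-preserving, since on the dense open $Y'\setminus E\cong Y\setminus V(\oJ)$ one has $\widetilde\xi=\xi$, so $[\widetilde\xi,\widetilde\eta]$ and the lift of $[\xi,\eta]$ agree on a dense open of the reduced $Y'$ and hence everywhere; set $\cF':=\cO_{Y'}\cdot\{\widetilde\xi:\xi\in\cF\}\subset\cD^{\log}_{Y'}$, a coherent, bracket-closed, logarithmic foliation. For (ii): over $Y'\setminus E$ we have $\cI_{X'}=\cI_X$ and $\widetilde\xi=\xi$, so $\cF'(\cI_{X'})\subset\cI_{X'}$ on a dense open, and since both sheaves are coherent and $Y'$ is reduced this holds on all of $Y'$. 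Finally I would iterate Steps 1--2 with $(Y',X',\cF')$, now using the \emph{logarithmic} version of the invariant, center, and maximal contact relative to the accumulating exceptional divisor (the logarithmic algorithm has the same functoriality and uniqueness); by Lemma~\ref{Lem:criterion} and Theorem~\ref{Th:resolution} the sequence of weighted blowups strictly decreases $\max\inv$ and terminates, leaving a smooth stack $Y^{(N)}$, a smooth $X^{(N)}$, a logarithmic foliation $\cF^{(N)}$, and $X^{(N)}$ which is $\cF^{(N)}$-invariant --- the asserted $\cF$-invariant resolution.

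\textbf{Main obstacle.} The conceptual heart is Step 1: making rigorous that the canonical center is functorial in the \emph{infinitesimal} (non-reduced) direction, so that an infinitesimal symmetry of $(Y,\cI_X)$ is automatically a symmetry of $\oJ$. This leans entirely on the uniqueness statement of Proposition~\ref{Prop:center-unique} and on checking that $\ord$, maximal contact, and coefficient ideals behave well under base change along $k\to k[\eps]/(\eps^2)$ (equivalently over $k[[t]]$); this is the one place where the formal/nilpotent bookkeeping must be done carefully, and it is why singular foliations are no harder here than smooth ones, since one never needs an $\cF$-invariant maximal contact hypersurface. The remaining, less glamorous obstacle is the local computation in the stacky weighted charts: verifying that $\widetilde\xi$ is a genuine regular logarithmic vector field there and that $\cF'$-invariance of $X'$ persists along the exceptional and stacky loci.
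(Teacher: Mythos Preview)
Your proposal is correct and follows essentially the same approach as the paper: the paper's (very brief) argument simply observes that generators of $\cF$ are infinitesimal automorphisms of $Y$ preserving $X$, so functoriality of the resolution in the formal or analytic setting makes the whole construction equivariant for such actions --- exactly your Step~1 via $\phi_\eps=\id+\eps\xi$ (or, as you note, $\exp(t\xi)$), with Step~2 being the standard consequence. The paper also remarks that this theorem follows from the more general $\cF$-aligned resolution (since for $\cF$-invariant $X$ the $\cF$-order is $\infty$ and the algorithm reverts to the ordinary center, which is then automatically $\cF$-aligned), but your direct functoriality route is the one actually indicated right after the statement.
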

Indeed, in this case one can think of generators of $\cF$ as infinitesimal automorphisms of $Y$ preserving $X$, and by functoriality of resolution (in the formal or analytic setting) in is equivariant for such group actions. This was used by people using Hironaka's resolution, for instance Kawamata \cite[Lemma 3.6]{Kawamata-fiber-spaces}.

The following seems new:

\begin{theorem}\label{Th:transverse} Assume that $X$ is generically transverse to $\cF$. Then there is a sequence of $\cF$-aligned blowups, with resulting $X' \subset Y', \cF'$, such that $X'$ is transverse to $\cF'$.
\end{theorem}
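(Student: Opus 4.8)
The plan is to reduce to the resolution machinery of the previous sections by building a smooth functorial singularity invariant that measures the failure of transversality of $X$ to $\cF$, and then showing that its maximal locus is always cut out by an $\cF$-aligned center, so that Lemma~\ref{Lem:criterion} applies essentially verbatim.

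First I would make the transversality defect precise and inductive. At $p\in X$ transversality means the composite $\cF\to\cT_Y\to\cN_{X/Y}$ is surjective at $p$; generic transversality says this holds at the generic point of $X$, so the bad locus $Z\subsetneq X$ is a proper closed subset. Where $\cF$ is a subbundle one can pick $\partial\in\cF$ with nonzero image in $\cN_{X/Y}$ generically; then, just as a maximal contact element lowers the dimension in classical resolution, $\partial$ lets one pass to a local leaf-transversal $T=V(x_1)$ and replace $(X,\cF)$ by $(X\cap T,\ \cF|_T)$ in a manifold of one lower dimension, with $X$ transverse to $\cF$ near $p$ iff $X\cap T$ is smooth of the expected codimension in $T$. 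This yields a candidate invariant $\operatorname{tr}_X(p)$: the order of the ideal measuring how far the above composite is from surjective, followed recursively by $\operatorname{tr}_{X\cap T}$. Upper semicontinuity and functoriality for smooth morphisms follow as for $\ord$ and the coefficient-ideal construction in the proposition above, and by design its minimal value is attained exactly on the transverse locus, with smooth maximal locus.

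Next I would produce the center. On the maximal locus of $\operatorname{tr}_X$ the data $(x_1,x_2,\dots)$ and weights $w_i$ come from the classical invariant-and-center construction applied to the relevant coefficient ideals on the successive transversals; one must check the resulting weighted center $\oJ$ is $\cF$-aligned, i.e.\ (the logarithmic version of) $\cF$ lifts to $[B_+/\GG_m]$. This is exactly where taking $x_1$ inside $\cF\cdot\cI_X$ pays off: because $x_1$ can be chosen of the form $\partial(g)$, the pullbacks of generators of $\cF$ acquire $s$-orders matched to the weights, and $\cF$ extends, after the customary twist by a power of the exceptional variable $s$, to a logarithmic foliation $\cF'$ on $Y'$. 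Then conditions (a) and (b) of Lemma~\ref{Lem:criterion}(1) are verified as before---the maximal locus of $\operatorname{tr}_{BX}$ is the vertex $V$ by the orbit-closure argument, and the maximal value is preserved on passage to the weighted normal cone---so $\max\operatorname{tr}_{Bl_\oJ(X)}<\max\operatorname{tr}_X$; well-ordering of the value set forces termination, which can only happen when $X'$ is transverse to $\cF'$.

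The main obstacle I expect is precisely the $\cF$-alignment together with the behaviour of $\cF$ along $\Sing\cF$. The clean leaf-transversal reduction works only where $\cF$ is a subbundle; over $\Sing\cF$ there is no honest foliation-maximal-contact, and the exceptional divisors accumulated along the way force one to work logarithmically throughout. Engineering a single inductive invariant that simultaneously sees (i) the transversality defect, (ii) enough resolution data on the transversals to pin down a center, and (iii) the constraint that the center keep $\cF$ liftable, and verifying that these demands are mutually consistent and well-founded, is the technical heart; the rest is a transcription of the characteristic-zero argument already given.
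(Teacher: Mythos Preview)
Your overall strategy --- construct a foliated invariant and center, then feed them into Lemma~\ref{Lem:criterion} --- is exactly what the paper does. But the specific invariant you sketch is both vaguer and weaker than the one the paper uses, and the gap is substantive.

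The paper does \emph{not} build a bespoke ``transversality defect'' measuring the failure of $\cF\to\cN_{X/Y}$ to be surjective. Instead it simply replaces $\cD$ by $\cF$ throughout the classical machinery: define the $\cF$-order $\Ford_\cI(p)=\min\{a:\cF^a(\cI)_p=\cO_{Y,p}\}$, and when this is finite take an $\cF$-maximal contact $x_1\in\cF^{a-1}(\cI)$ and the $\cF$-coefficient ideal, exactly paralleling the earlier sections. The crucial extra ingredient you are missing is what to do when $\Ford_\cI(p)=\infty$, i.e.\ when $\cI$ (or one of the later coefficient ideals) becomes $\cF$-invariant. At that moment the paper \emph{switches} to ordinary derivatives and records the invariant as $\infty+\inv_\cI$; this is how the transverse variables $x_i$ and the $\cF$-invariant variables $y_j$ of an $\cF$-aligned center $(x_1^{a_1},\dots,x_k^{a_k},\,y_1^{b_1},\dots,y_l^{b_l})$ are produced in order, and why the center is $\cF$-aligned by construction rather than by a separate verification.

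Your recursion ``pick $\partial\in\cF$ with nonzero image in $\cN_{X/Y}$, pass to a leaf-transversal, repeat'' has no mechanism for this switch: at a stage where the residual ideal is $\cF$-invariant there is no such $\partial$ at the relevant points, and the induction stalls. You flag the obstacle as $\Sing\cF$, but that is a misdiagnosis --- the issue arises even where $\cF$ is a subbundle, whenever the residual data becomes tangent to $\cF$. Once you replace your ad hoc $\operatorname{tr}_X$ by the $\cF$-order with the $\infty$-switching rule, the rest of your outline (functoriality, upper semicontinuity, verifying (a) and (b) of Lemma~\ref{Lem:criterion} via the orbit argument) goes through as you say.
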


This brings up a question: what is an aligned blowup? Again there are two extreme cases: if the center $J$ is $\cF$-invariant then it is $\cF$-aligned. But also important is the case when  the $x_i$ are transverse of $\cF$. An $\cF$-aligned center is a mixture of the two: on formal completion it can be written as $(x_1^{a_1},\ldots, x_k^{a_k}, \ y_1^{b_1}, \ldots, y_k^{b_l})$, where $x_i$ are transverse and the partial center $( y_1^{b_1}, \ldots, y_k^{b_l})$ is invariant.

The two theorems above are  consequences of the following:

 \begin{theorem} For $X\subset Y, \cF$ as above, there is a functorial sequence of $\cF$-aligned blowups, arriving at $X'\subset Y', \cF'$, so that $X'$ is smooth and $\cI_X$ is an $\cF'$-aligned center itself.\end{theorem}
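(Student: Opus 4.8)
\medskip

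The plan is to run the inductive invariant-and-center machinery of the previous sections, but from the start adapted to the foliated setting so that \emph{every} center produced is $\cF$-aligned. The key structural observation is that a foliated manifold $(Y,\cF)$ looks, formally locally, like a product: one can split coordinates as $(x_1,\ldots,x_m,y_1,\ldots,y_q)$ where $\cF$ is spanned (in the logarithmic sense, allowing for a normal-crossings divisor) by $\partial_{y_1},\ldots,\partial_{y_q}$ together with possibly singular vector fields, and the $x_j$ are transverse. First I would make precise an $\cF$-relative order and $\cF$-relative derivative ideals $\cD_\cF^a(\cI)$, built only from operators lying in (the enveloping algebra of) $\cF$ together with the transverse derivatives $\partial_{x_j}$ in a controlled way. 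The point is to set up an induction on $\dim Y$ in which maximal contact is always sought \emph{first} among transverse directions and only afterwards among $\cF$-directions, so that the resulting center automatically has the shape $(x_1^{a_1},\ldots,x_k^{a_k},\,y_1^{b_1},\ldots,y_l^{b_l})$ with the $x$-part transverse and the $y$-part a genuinely $\cF$-invariant subcenter.

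\medskip

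\textbf{Step 1: the foliated invariant.} Define $\inv^\cF_\cI$ exactly as $\inv_\cI$ was defined, but using $\cD_\cF$ in place of $\cD$ and stratifying the maximal-contact choice so that transverse maximal contact elements are exhausted before invariant ones. Re-prove the analogue of the proposition that $\inv^\cF_\cI$ is a smooth, functorial singularity invariant; the only genuinely new point is functoriality for $\cF$-smooth morphisms, which follows as before since $\cF$-derivatives are compatible with étale and product-with-leaf maps, and smoothness of the maximal locus, which is again cut out by the chosen (transverse, then invariant) contact elements.

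\medskip

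\textbf{Step 2: the criterion on the deformation space.} Given $\cF$ on $Y$, I would check that an $\cF$-aligned center $\oJ$ lifts $\cF$ to a foliation $\cF'$ on $B\setminus V$ (this is the heart of what ``aligned'' must mean: the change of variables $x_i=s^{w_i}x_i'$ for transverse coordinates, and the invariance of the $y$-subcenter, are precisely what is needed for the vector fields of $\cF$ to pull back to regular vector fields closed under bracket on $B$). Then one re-runs, verbatim, the proof of Lemma~\ref{Lem:criterion} and the ``maximal locus is $V$'' claim, now with $\inv^\cF$ and the $\cF'$-lift: functoriality under the $\GG_m$-action and under $B\setminus V\to Bl_\oJ(Y)$ is unchanged, and the orbit-closure argument forcing $V$ to be a connected component of the maximal locus goes through because $\GG_m$ acts by $\cF$-smooth maps.

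\medskip

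\textbf{Step 3: transform of coefficient ideals, foliated version.} Re-establish Lemma~\ref{Lem:transform-derivative} and Proposition~\ref{Prop:transform-coefficient} with $\cD_\cF$: the chain-rule computation $\partial_{x_i'}(s^{-\ell}f(s^{w_j}x_j'))=s^{-\ell+w_i}(\partial_{x_i}f)(s^{w_j}x_j')$ is identical for transverse derivatives, and for the vector fields of $\cF$ one uses that they act on the $y$-coordinates, which are \emph{un}transformed (their subcenter being invariant), so no negative powers of $s$ intervene and the inclusion $(\cD_\cF(\cI))'\subset\cD_\cF(\cI')$ is if anything easier. From here the ``$\max_B\ord=a_1$ and $x_1'$ is maximal contact'' claim and the final ``$\max_B\inv^\cF_{\cI'}=(a_1,\ldots,a_k,b_1,\ldots,b_l)$'' claim follow by the same induction as in the non-foliated section, yielding condition \ref{Lem:criterion}(b). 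Combining Steps 1--3 with Lemma~\ref{Lem:criterion}(2) gives a functorial sequence of $\cF$-aligned blowups after which $X'$ is smooth, and since the center at the last stage is $\oJ_{\cI_{X'}}$ itself --- which by construction is $\cF'$-aligned --- $\cI_{X'}$ is an $\cF'$-aligned center, as required.

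\medskip

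\textbf{Expected main obstacle.} The routine parts are genuinely routine once the bookkeeping is fixed; the real difficulty is \emph{Step 2, showing $\cF$ actually lifts across an $\cF$-aligned blowup} --- i.e.\ that the mixed center shape $(x_i^{a_i};\,y_j^{b_j})$ is exactly the class of centers for which the weighted blowup is $\cF$-equivariant. One must check that the transverse vector fields $\partial_{x_i}$, after the substitution $x_i=s^{w_i}x_i'$, still lie in $\cD_{B}^{\log}$ (this forces the exceptional $s$ to enter $\cF'$, so $\cF'$ is a genuinely \emph{logarithmic} foliation even if $\cF$ was not), and that the singular/invariant part of $\cF$ together with the invariant subcenter $(y_j^{b_j})$ produces regular, bracket-closed lifts --- the invariance $\cF(\cI_{(y_j^{b_j})})\subset\cI_{(y_j^{b_j})}$ being used precisely to keep these lifts regular along $V$. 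Ensuring, simultaneously, that the inductively produced center always has this shape (rather than mixing a transformed $y$ into a later transverse slot, or vice versa) is the delicate combinatorial point, and is where the stratified choice of maximal contact in Step 1 must be shown to be compatible with passing to maximal contact hypersurfaces.
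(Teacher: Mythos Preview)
Your global architecture --- define a foliated invariant, verify it is smooth and functorial, check the criterion of Lemma~\ref{Lem:criterion} on the deformation space, and prove foliated analogues of Lemma~\ref{Lem:transform-derivative} and Proposition~\ref{Prop:transform-coefficient} --- matches the paper's sketch exactly, and your identification of Step~2 (lifting $\cF$ across an aligned blowup, with the exceptional forcing a logarithmic structure) as the main obstacle is on target; the paper indeed works in $\cD^{\log}_{B_+/\AA^1}$ for precisely this reason.

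However, your Step~1 has the order of operations inverted, and this is not cosmetic. You set up coordinates with $\cF=\Span(\partial_{y_1},\ldots,\partial_{y_q})$, call the $x_j$ ``transverse'', and propose to seek maximal contact ``first among transverse directions and only afterwards among $\cF$-directions'', using ``transverse derivatives $\partial_{x_j}$''. The paper does the opposite. One first computes the \emph{$\cF$-order} using only derivations in $\cF$; if finite one takes $\cF$-maximal contact --- an element $y$ with $\cF(y)=\cO$, so $V(y)$ is \emph{transverse} to $\cF$ in the sense used in the definition of aligned center. Only when the $\cF$-order is \emph{infinite} (the ideal is $\cF$-invariant) does one revert to the full tangent sheaf, recording the invariant as $\infty+\inv_\cI$ and producing the $\cF$-invariant tail of the center. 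The examples make this explicit: for $\cI=(y^3-x^2)$, $\cF=\Span(\partial_y)$, the first step is the $\cF$-order $3$ with $\cF$-maximal contact $y$, and only then does the $\cF$-invariant ideal $(x^4)$ on $V(y)$ get the entry $\infty+2$. Your ``transverse derivatives $\partial_{x_j}$'' are not intrinsic --- they depend on a non-canonical splitting of $\cD_Y\supset\cF$ --- so they cannot be the first layer of a functorial invariant; the intrinsic object at hand is $\cF$ itself, and that is what the paper uses first. This inversion also contaminates your Step~3: the vector fields of $\cF$ act on the $y$-variables, which are exactly the ones that \emph{do} get transformed by the blowup (they are the transverse part of the aligned center), not the ones left untouched. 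Once you swap the order --- $\cF$-derivatives first, then full $\cD$ with the $\infty+$ bookkeeping when the $\cF$-order is infinite --- your outline coincides with the paper's.
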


 How do we prove this? We construct an invariant and a center, and show that the criterion \ref{Lem:criterion} applies. One starts with defining the $\cF$-order, the analogue of order but only using derivatives from $\cF$. If finite, one defines $\cF$-maximal contact and $\cF$-coefficient ideals as before. If the $\cF$-order is infinite, namely $\cI$ is $\cF$-invariant, one reverts to usual derivatives, but with $\cF$-invariant being $\infty+\inv_\cI$, see examples below. At the end this all works nicely.
 
 We compute transforms of ideals and of foliations on $B_+$ rather than on $X'$. Specifically for foliations, we work with $\cD^{\log}_{B_+/\AA^1}$, which provides descent to $\cD^{\log}_{X'/\cA^1}= \cD^{\log}_{X'}$, where $\cA = [\AA^1/\GG^m]$. One point to be careful about: even when $\cF' \subset \cD_{B_+/\AA^1}$ is smooth, the corresponding foliation on $X'$ may be only logarithmically smooth.
 
 \subsection{Examples of foliated principalization}
 
  \begin{example}[For Theorem \ref{Th:transverse}]  Let $X = V(y^3-x^2)$ and $\cF= \Span\left(\partial_y\right)$. We define the $\cF$-order in an analogous way to the order, using only $\cF$. Then the $\cF$-order at the origin is 3 with $\cF$-maximal contact $y$. We have $\cI[y]= (x^4)$ with rescaling factor $(3-1)! = 2$. Its $\cF$-oder is infinite, and its $\cD$-order is 4, which rescales down to $4/2=2$.  We therefore assign $\cF$-invariant $(3,\infty+2)$ with $\cF$-center $(y^3,x^2)$. The blowup has equation $(y'^3-x'^2)$ which is transverse to the transformed foliation $\cF'=\Span(\partial_{y'})$ as we have removed $x'=y'=0$.
 \end{example}

 \begin{example}[For Theorem \ref{Th:invariant}] Let $X = V(y^3-x^2)$ and $\cF= \Span\left(2y\partial_y+3x\partial_x\right)$. Since $X$ is $\cF$-invariant the $\cF$-order is $\infty$. As the usual invariant is $(2,3)$ with center $(x^2,y^3)$ we assign $\cF$-invariant $(\infty+2,\infty+3)$ still with center $(x^2,y^3)$.  The blowup $X'$ is regular, has equation $(y'^3-x'^2)$ with $x'=y'=0$ removed, and is invariant under the transformed foliation $\cF'= \Span\left(2y'\partial_{y'}+3x'\partial_{x'}\right)$.
 \end{example}

\begin{exercise} The examples are computed on $B_+$. Compute the transforms on \'etale charts for the blow-up. \end{exercise}

 \subsection{Preservation and reduction in good foliation classes}
 Let us look back at the problem of reducing the singularities of a foliation.
 \begin{theorem} The transform under an $\cF$-aligned blowup of a log smooth foliation $\cF$ is log smooth. The restriction of a log smooth foliation $\cF$ to the support of an $\cF$-aligned center not contained in the logarithmic boundary  is log smooth. \end{theorem}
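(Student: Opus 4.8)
\emph{Proof strategy.} Logarithmic smoothness of a foliation is local and is detected on formal completions, so in both statements the plan is to pass to the completion $\widehat{\cO}_{Y,p}$ at a point $p$ of the support $Z=V(\oJ)$ of the center, and to use the defining property of an $\cF$-aligned center to split it into a transverse part $(x_1^{a_1},\dots,x_k^{a_k})$ and an $\cF$-invariant part $(y_1^{b_1},\dots,y_l^{b_l})$. The first step is a \emph{normal form}: since $\cF$ is a subbundle of $\cD^{\log}_Y$ closed under bracket and the $x_i$ are transverse --- so no $x_i$ is a boundary coordinate --- a logarithmic Frobenius (``flow-box'') argument gives formal coordinates in which $x_1,\dots,x_k$ are coordinates, $\cF$ is freely generated by $\partial_{x_1},\dots,\partial_{x_k}$ together with a log smooth foliation $\cG$ of rank $\rk\cF-k$ in the remaining coordinates, and $\cF$ (hence $\cG$) is tangent to $V(y_1,\dots,y_l)$ because $(y_j^{b_j})$ is $\cF$-invariant. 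In this way the transverse directions are disposed of uniformly, and the remaining content concerns the transform and restriction of the log smooth foliation $\cG$ relative to the subcenter $(y_j^{b_j})$, which is invariant for $\cG$ --- essentially the purely invariant case.

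For the transform statement I would then compute on $B_+$. Writing the reduced center as $(x_i^{1/w_i},\,y_j^{1/w'_j})$ and $\sigma\colon B_+\to Y$, the substitution $x_i=s^{w_i}x_i'$, $y_j=s^{w'_j}y_j'$ transports the conormal of $\cF$ into $\Omega^{\log}_{B_+/\AA^1}$; after the evident saturation one reads off that $\cF'$ is freely generated, inside $\cD^{\log}_{B_+/\AA^1}$, by $\partial_{x_1'},\dots,\partial_{x_k'}$ together with the transforms of the generators of $\cG$ --- which, being logarithmic fields tangent to $V(y_j)$, transform to logarithmic relative fields of the same type, their $s\partial_s$-components dying in the relative tangent. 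So $\cF'$ is a subbundle of $\cD^{\log}_{B_+/\AA^1}$, relatively log smooth on $B_+$; being $\GG_m$-equivariant as a subsheaf, it descends, along the passage from $\cD^{\log}_{B_+/\AA^1}$ to $\cD^{\log}_{X'}$, to a subbundle of $\cD^{\log}_{X'}$. The descent rewrites each $\partial_{x_i'}$ as a unit multiple $x_i'\partial_{x_i'}$ on its chart and reduces it modulo the orbit field $-s\partial_s+\sum w_ix_i'\partial_{x_i'}+\sum w'_jy_j'\partial_{y_j'}$, so the transverse generators acquire exactly one logarithmic pole along the exceptional divisor, while the invariant generators keep their original logarithmic type. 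This is precisely the mechanism by which a foliation smooth --- or log smooth --- on $Y$ becomes only log smooth on $X'$.

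For the restriction statement, $Z=V(x_1,\dots,x_k,y_1,\dots,y_l)$ is, by the hypothesis that it is not contained in the logarithmic boundary and by compatibility of the $\cF$-aligned center with the boundary, log smooth for the induced boundary, and $\cF$ restricts into $\cD^{\log}_Z$. I would factor the restriction as restriction to $V(y_1,\dots,y_l)$ followed by restriction to $V(x_1,\dots,x_k)$. The $\cF$-invariance of $(y_j^{b_j})$ makes $\cF$ tangent to $V(y_1,\dots,y_l)$, so it restricts there to a log smooth foliation of unchanged rank, namely $\langle\partial_{x_1},\dots,\partial_{x_k}\rangle\oplus\cG$ in the normal form; restricting further to $V(x_1,\dots,x_k)$ kills the transverse summand and leaves $\cG|_Z$, a subbundle of $\cD^{\log}_Z$ of rank $\rk\cF-k$. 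This last step is the logarithmic form of the standard fact that a foliation transverse to a smooth subvariety restricts to a subbundle on it.

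The hard part will be the normal form lemma and the purely $\cF$-invariant case underlying both assertions. Producing coordinates adapted simultaneously to the log structure, to the subbundle $\cF$, and to the transverse/invariant decomposition of the center requires a logarithmic Frobenius theorem plus careful bookkeeping of how $(y_j^{b_j})$ sits relative to $\cF$; and in the invariant case one must still verify that lifting $\cF$ (whose generators then act as infinitesimal automorphisms preserving the center) through the weighted blowup preserves relative log smoothness everywhere on $B_+$, including along $\{s=0\}$ and at the non-free points of the $\GG_m$-action. Closely tied to this is the descent: one must check that the explicit relative-log frame on $B_+$ is $\GG_m$-invariant, that $\cF'$ is a subbundle at every point of $B_+$, and that after descent it is a subbundle of $\cD^{\log}_{X'}$ for the boundary of $X'$ enlarged by the exceptional divisor --- the concrete content of the last point being the tally of which generators gain a logarithmic pole along the exceptional divisor and which do not.
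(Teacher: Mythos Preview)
The paper does not actually prove this theorem: it is stated in Section~6 as one of the results imported from \cite{ABTW-foliated}, with no argument given beyond the surrounding methodological remarks. So there is no paper-proof to compare against line by line.

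That said, your strategy is fully consistent with the hints the paper does provide. The paper explicitly says that transforms of foliations are computed on $B_+$ via $\cD^{\log}_{B_+/\AA^1}$ and then descended to $\cD^{\log}_{X'}$ through the identification $\cD^{\log}_{X'/\cA}=\cD^{\log}_{X'}$ with $\cA=[\AA^1/\GG_m]$, and it flags exactly the phenomenon you isolate --- that a foliation smooth inside $\cD_{B_+/\AA^1}$ may become only log smooth on $X'$. Your plan to (i) pass to the formal completion and invoke the definition of an $\cF$-aligned center to separate transverse variables $x_i$ from an $\cF$-invariant subcenter in the $y_j$, (ii) straighten the transverse part via a logarithmic Frobenius normal form so that $\partial_{x_1},\dots,\partial_{x_k}\in\cF$, (iii) compute the lift on $B_+$ and check it is a subbundle of $\cD^{\log}_{B_+/\AA^1}$, and (iv) descend by $\GG_m$-equivariance, is precisely the shape one expects and matches the paper's indicated framework. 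Your treatment of the restriction statement --- first restrict along the invariant locus $V(y_1,\dots,y_l)$ where rank is preserved, then along the transverse locus $V(x_1,\dots,x_k)$ where the $\partial_{x_i}$ summand is killed --- is also the natural route.

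You have correctly identified where the real work lies: the logarithmic normal-form lemma adapted simultaneously to the boundary, the subbundle $\cF$, and the aligned center, and the verification that the lifted frame remains a subbundle over all of $B_+$ including $s=0$ and the stacky points. Nothing in your outline is wrong; what remains is to execute these two steps carefully, which is exactly what the cited paper does.
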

 
 There are further classes of foliations that are preserved under aligned blowups, for instance Belotto's class of \emph{monomial foliations}. 
 
 \begin{theorem} The transform under an $\cF$-aligned blowup of a monomial foliation $\cF$ is monomial.  The restriction to the support of an $\cF$-aligned center  of a monomial foliation $\cF$ is monomial. \end{theorem}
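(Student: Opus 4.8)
The plan is to follow exactly the descent strategy used above for ordinary resolution: instead of computing on the stacky blowup $X' = Bl_\oJ(X)$ directly, one works on $B$ with the relative logarithmic operators $\cD^{\log}_{B/\AA^1}$, for which the substitution $x_i = s^{w_i}x_i'$ is explicit, and then descends along the smooth morphism $B_+ \to X'$, which carries $\cD^{\log}_{B_+/\AA^1}$ to $\cD^{\log}_{X'}$. Since being monomial can be tested on formal completions, it suffices to verify both claims in the formal model, where the $\cF$-aligned center has its normal form $(x_1^{a_1},\ldots,x_k^{a_k},\,y_1^{b_1},\ldots,y_l^{b_l})$ with the $x_i$ transverse to $\cF$ and the partial center $(y_1^{b_1},\ldots,y_l^{b_l})$ being $\cF$-invariant.

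First I would record the local normal form defining Belotto's monomial class: relative to its boundary divisor $E$, $\cF$ is generated by logarithmic vector fields whose coefficients are monomials in the coordinates cutting out $E$, together with ordinary coordinate derivations along the factors disjoint from $E$. The point is that monomiality is a purely combinatorial-plus-monomial-shape condition on the generators once the boundary is fixed, so the whole statement reduces to checking that the weighted-blowup substitution and the restriction to a coordinate subspace both preserve this shape after the new exceptional coordinate $s$ has been adjoined to the boundary, i.e. once we pass from $E$ to $E' = \sigma^{-1}E + \{s=0\}$.

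For the blowup claim, I would compute $\sigma^*$ of each generator. A log-derivation $x_i\partial_{x_i}$ along a coordinate is preserved by the monomial substitution up to a monomial factor; a transverse coordinate derivation $\partial_{x_i}$ becomes $s^{-w_i}\partial_{x_i'}$; and a generator annihilating $y_j^{b_j}$ pulls back to a log-derivation along $y_j'$ and along $s$. Taking the controlled transform of $\cF$ — dividing each generator by the power of $s$ forced by admissibility of the $\cF$-aligned center, exactly as $\sigma^* f = s^\ell f'$ in the ideal case of Lemma \ref{Lem:transform-derivative} — these become honest relative logarithmic vector fields with monomial coefficients for $E'$. Closure under Lie bracket is inherited because $\sigma^*$ is a homomorphism of Lie algebras and dividing by $s$ only rescales within the module; one then checks the resulting coherent subsheaf of $\cD^{\log}_{B/\AA^1}$ descends.

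For the restriction claim, the support of an $\cF$-aligned center not contained in the logarithmic boundary is, formally, a coordinate subspace transverse to $E$; restricting the monomial generators to it turns the normal log-derivations into part of the ambient data of the smaller manifold and leaves monomial generators along the subspace, as in the maximal-contact restrictions used above. The step I expect to be the real obstacle is the logarithmic bookkeeping in the blowup: after the substitution the transverse $x_i$ carry the exceptional parameter $s$, and one must show the transform of $\cF$ is logarithmic along $\{s=0\}$ and not merely meromorphic there. This is precisely where the admissibility relations $a_iw_i = \ell$ and the $\cF$-invariance of the $y$-part are used, so that the powers of $s$ balance and the controlled transform lands in $\cD^{\log}_{B/\AA^1}$; it is the same phenomenon noted above whereby a smooth $\cF'$ on $B_+$ may descend only to a logarithmically smooth foliation on $X'$, which for the monomial class is no loss, since monomial foliations are defined relative to a boundary to begin with.
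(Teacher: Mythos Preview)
The paper does not actually prove this theorem: it is stated as a result imported from \cite{ABTW-foliated}, with no argument given in these lecture notes. So there is no ``paper's own proof'' to compare against.

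That said, your strategy---compute on $B_+$ using $\cD^{\log}_{B_+/\AA^1}$, work in the formal normal form of an aligned center, and descend---is exactly the philosophy the paper advocates throughout (cf.\ the concluding remark that ``the correct place to compute transforms is $B_+$''), and is consistent with how the paper handles the analogous log-smooth statement and the foliated principalization examples.

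A few places where your sketch would need tightening before it becomes a proof. First, the transform of a foliation under a blowup is defined by \emph{saturation} of the pulled-back subsheaf inside $\cD^{\log}$, not by dividing each generator by a fixed power $s^\ell$; the relation $a_iw_i=\ell$ governs the controlled transform of the \emph{ideal}, not of the foliation, so your appeal to it is misplaced. What you actually need is that each monomial generator, after the substitution $x_i=s^{w_i}x_i'$, becomes $s^m$ times a vector field with monomial coefficients in the new coordinates (including $s$), so that after saturating in $s$ one lands in $\cD^{\log}_{B/\AA^1}$ with monomial generators; the $\cF$-invariance of the $y$-part is what guarantees the $y$-generators lift to honest log derivations rather than meromorphic ones. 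Second, the assertion that ``$\sigma^*$ is a homomorphism of Lie algebras'' is not literally correct: vector fields do not pull back along arbitrary morphisms, and $\sigma$ is not \'etale. What is true is that derivations of $\cO_Y$ lift (possibly meromorphically) to $s$-relative derivations of $\cO_B$, and this lifting respects brackets where it is defined---but that is precisely the delicate point you flag at the end, so you should not also invoke it as automatic earlier on. Finally, you never state the definition of a monomial foliation, which leaves the core verification ``monomial shape is preserved'' without content as written.
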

 
 This has the following implication: a class $\cC$  of saturated foliations that  is stable under aligned blowups, log smooth products, and  restrictions as above is said to be \emph{thick}. 
 
 A $\cC$-reduction of a foliation $\cF$ is a modification $X' \to X$ such that the saturated  pullback $\cF'$ is in $\cC$.
 
 \begin{theorem} Say $\cF$ is the saturated preimage under a dominant rational map of a foliation in a thick class $\cC$.  Then $\cF$ admits a $\cC$-reduction. In particular, 
 \begin{itemize} 
 \item A totally integrable foliation has a log smooth and monomial reduction of singularities.
 \item  A Darboux-integrable foliation has a log smooth and monomial reduction of singularities.
 \end{itemize}\end{theorem}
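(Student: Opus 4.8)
The plan is to reduce the theorem to relative logarithmic resolution applied to a single morphism. Write $\cF = \phi^{-1}_{\mathrm{sat}}\cF_0$ for the given dominant rational map $\phi\colon X \dashrightarrow X_0$ and $\cF_0\in\cC$. First I would record the (easy, but needed) compatibility lemma: the saturated preimage is transitive along compositions, and in particular if $p\colon \widetilde X\to X$ is a modification then $p^{*}_{\mathrm{sat}}\cF$ is the saturated preimage of $\cF_0$ under $\phi\circ p$, and if $q\colon X_0'\to X_0$ is a modification then $\cF$ is also the saturated preimage of $q^{*}_{\mathrm{sat}}\cF_0$ under the lifted map $X\dashrightarrow X_0'$. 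Using functorial embedded resolution (Theorem \ref{Th:resolution}), resolve the indeterminacy of $\phi$ by aligned blowups, obtaining a modification $p\colon\widetilde X\to X$ and an honest dominant morphism $\widetilde\phi\colon\widetilde X\to X_0$. By the lemma it suffices to construct a $\cC$-reduction of $\widetilde\phi^{-1}_{\mathrm{sat}}\cF_0$, i.e. to treat the case of a morphism.

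Next I would apply relative logarithmic resolution, in its foliated form (\cite{ATW-relative}, enhanced as in \cite{ABTW-foliated}), to $\widetilde\phi\colon\widetilde X\to X_0$: after a further modification of $\widetilde X$ by blowups aligned with the evolving saturated preimage, and of $X_0$ by $\cF_0$-aligned blowups, one may assume $\widetilde\phi$ is log smooth. The base modifications are harmless because $\cC$ is thick, so the transform of $\cF_0$ stays in $\cC$; and on the source, alignment is exactly what makes the criterion of Lemma \ref{Lem:criterion} available, so the blowups genuinely improve $\widetilde\phi$. I expect this step --- that relative resolution of $\widetilde\phi$ can be carried out using only foliation-aligned centers on both source and base --- to be the main obstacle, and it is the real content imported from \cite{ABTW-foliated}.

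Once $\widetilde\phi$ is log smooth the rest is formal. Étale-locally in the log sense $\widetilde\phi$ is a projection $X_0\times W\to X_0$ with $W$ a toric-type log smooth fibre, and $d\widetilde\phi$ is a surjection of log tangent bundles with kernel $\cD^{\log}_{\widetilde X/X_0}$; hence
$$\widetilde\phi^{-1}_{\mathrm{sat}}\cF_0 \;=\; \cD^{\log}_{\widetilde X/X_0} + \widetilde\phi^{*}\cF_0,$$
which is already a subbundle of $\cD^{\log}_{\widetilde X}$ --- no saturation needed --- and is precisely the log smooth product of $\cF_0\in\cC$ with the full log tangent foliation of the fibre. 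Since $\cC$ is stable under log smooth products, this foliation lies in $\cC$, so the composite $\widetilde X\to X$ is the desired $\cC$-reduction of $\cF$.

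Finally I would read off the two corollaries. A totally integrable foliation is by definition the saturated preimage of the trivial (rank $0$) foliation $\{0\}$ under a dominant rational fibration; $\{0\}$ is both log smooth and monomial, hence lies in the thick class of log smooth monomial foliations, and the theorem yields a log smooth and monomial reduction (for this case one does not even need the foliated enhancement of relative resolution, only \cite{ATW-relative} applied to $\widetilde\phi$). A Darboux-integrable foliation is, by the classical description, the saturated preimage under a dominant rational map to a toric variety of a monomial logarithmic-linear foliation, which again lies in that thick class; the general theorem then applies verbatim.
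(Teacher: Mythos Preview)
Your approach is plausible but takes a genuinely different route from the paper's sketch. The paper does \emph{not} resolve the rational map and then apply relative resolution to the morphism. Instead it uses the \emph{graph trick}: embed $X$ via the closure of the graph of $\phi$ into $Y=X\times X_0$, equip $Y$ with the product foliation $\cH=\pi_{X_0}^{*}\cF_0$ (which lies in $\cC$ by stability under log smooth products), and then apply directly the foliated \emph{embedded} resolution theorem proved immediately above to the pair $X\subset(Y,\cH)$. The output $X'\subset(Y',\cH')$ has $\cH'\in\cC$ (stability under aligned blowups) and $X'$ is itself an $\cH'$-aligned center, so the restriction $\cF'=\cH'|_{X'}$ lies in $\cC$ by the third thickness axiom. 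No modification of the base $X_0$ is ever performed, and all three defining properties of a thick class are used.

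By contrast, your argument hinges on a foliated enhancement of relative resolution in which the base changes are $\cF_0$-aligned. You correctly flag this as the main obstacle, but note that the black box you are invoking is not the one the paper supplies: what \cite{ABTW-foliated} (as summarized here) establishes is foliated embedded resolution, and the graph embedding is precisely the device that converts the problem into that form, bypassing the need for aligned relative resolution altogether. Your route also never invokes the ``restriction to an aligned center'' clause of thickness, which is the operative step in the paper's argument. Finally, for the Darboux corollary the paper inserts one extra move you omit: a monomial foliation is first modified to become simultaneously log smooth and monomial, and only then does the main statement apply with $\cC$ the class of log smooth monomial foliations.
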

 
 \begin{proof}[Sketch of proof] Consider the graph embedding $X \hookrightarrow X\to B= Y$ with the foliation-pullback $\cH:= \pi_B^*\cG$. Note that $\cH$ is in $\cC$. Its restriction to $X$ is $\cF$. Consider the embedded resolution of $X$ in the foliated manifold $(Y, \cH)$. The result is $X'\subset Y', 
 \cH'$ such that $\cH'$ is  still in $\cC$ and $X$ itself is a, $\cH'$-aligned center. So the restriction $\cF'$ is in $\cC$. 
 
 Now a totally integrable foliation is the foliation-pullback of the zero foliation, which is both log smooth and monomial. A Darboux-integrable foliation is by definition the foliation pullback of a monomial foliation. One shows that a monomial foliation has a modification that is both log smooth and monomial.
 \end{proof}
 
 This makes it interesting to investigate the following:
 \begin{problem} Show that the transform under an $\cF$-aligned blowup of a log canonical foliation $\cF$ is log canonical. \end{problem}
\begin{example} Let $\cF= \Span(3y^2 \partial_x - 2x \partial_y)$. This is the relative foliation $\cD_{\AA^2/\AA^1}$ of the map $\xymatrix{\AA^2\ar[rr]^{y^3+x^2}&&  \AA^1}$, hence it is the foliation pullback of the zero foliation. As the zero foliation is log smooth, we claim that we can transform it to a log smooth foliation. 

The foliation $\cF$ is thus the restriction via the graph $(x,y) \mapsto (x,y,y^3+x^2)$ of the foliation $\cH=\Span(\partial_x,\partial_y)$ on the $xyz$-space $\AA^3$. 

The ideal of the graph is $(z-y^3-x^2)$, with $\cH\text{-}\ord_\cI=2$ at the origin. The restricted coefficient ideal is $\cI[x]=(z-y^3)$ with $\cH[x]\text{-}\ord_{\cI[x]}= 3$ at the origin and $\cH[x]$-maximal contact $y$. Finally the remaining coeficient ideal is $(z^2)$ with rescaling factor 2, having infinite $\cH[x,y]$-order and order $2$. Rescaling back, we obtain the invariant $(2,3,\infty+1)$ with center $(x^2,y^3, z)$. 

The weighted blowup has equation $(z'-y'^3-x'^2)$ on the $\GG_m$-chart $B_+$, where the origin is excluded. It is transversal to the transformed foliation $$\cH' \quad =\quad  \Span(\partial_{x'},\partial_{y'})\qquad\subset\qquad \cD_{B_+/\AA^1_s} = \cD^{\log}_{B_+/\AA^1_s}.$$ The resulting closed subscheme is the corrsponding weighted blowup of the plain. Again on the chart $B_+$  the restricted foliation $\cF'= \Span(3y'^2 \partial_{x'} - 2x' \partial_{y'})$ is nonsingular. This implies that it is logarithmically smooth on $X'$.
\end{example}

\begin{exercise} Explicitly compute the resulting foliation on \'etale charts for the weighted blowup.\end{exercise}

The examples and exercises show in no uncertain terms that the correct place to compute transforms is $B_+$.
 
\section{Suggestions for selected exercises}
\subsection{Exercise \ref{Ex:picture}}\label{Sol:picture} \hfill

\hspace*{1in}  \includegraphics[height=2.5in]{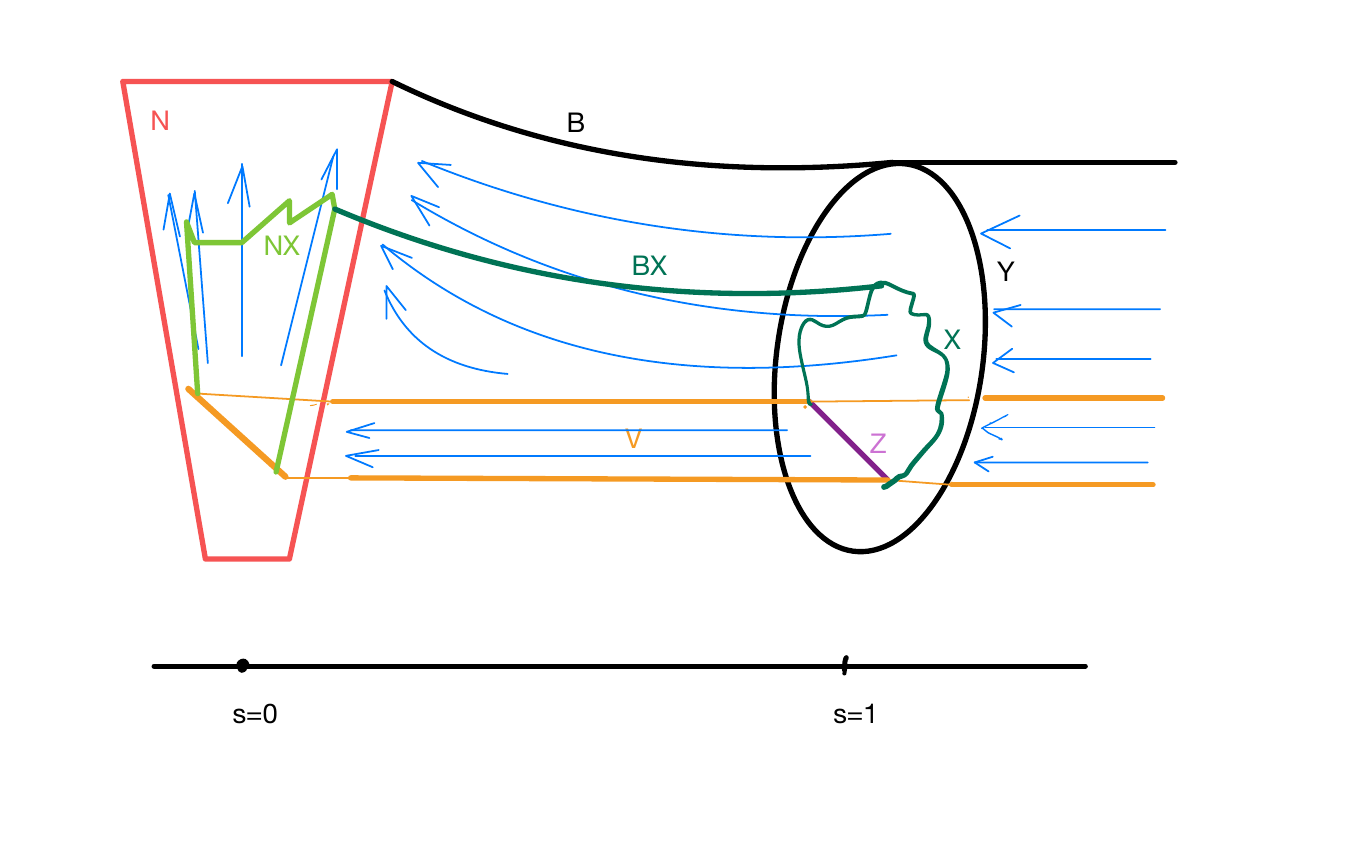}

(See \cite{ABQTW}.)

\subsection{Exercise \ref{Ex:Whitney-Newton}}\label{Sol:Whitney-Newton} The Newton polyhedron of  $x^2 - y^2z$ in pink, the center $(x^2,y^3,z^3)$ behind in blue. \hfill

\hspace*{1in}  \includegraphics[height=2in]{whitney-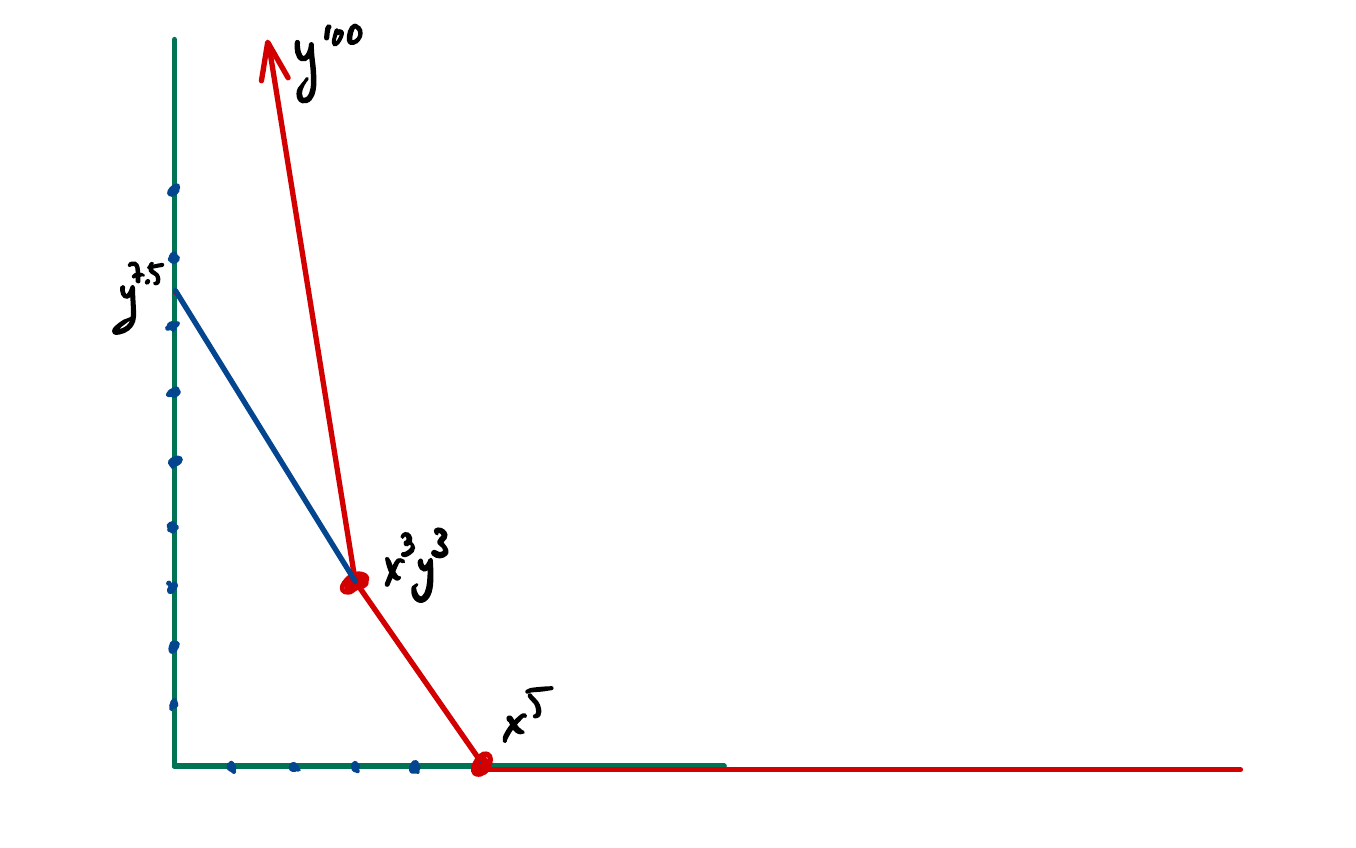}

\subsection{Exercise \ref{Ex:Whitney}}\label{Sol:Whitney} The red dot is the $\mu_3$ point $s = z'=y'=0$. The green triangle is the exceptional weighted projective plane $s=0$. The light green line is the $\mu_2$-locus $s=x'=0$. The purple curve is the intersection of the umbrella with the exceptional divisor --- the $\mu_2$ action identifies it with its mirror image, drawn with a lighter touch.  \hfill

\hspace*{1in}  \includegraphics[height=2in]{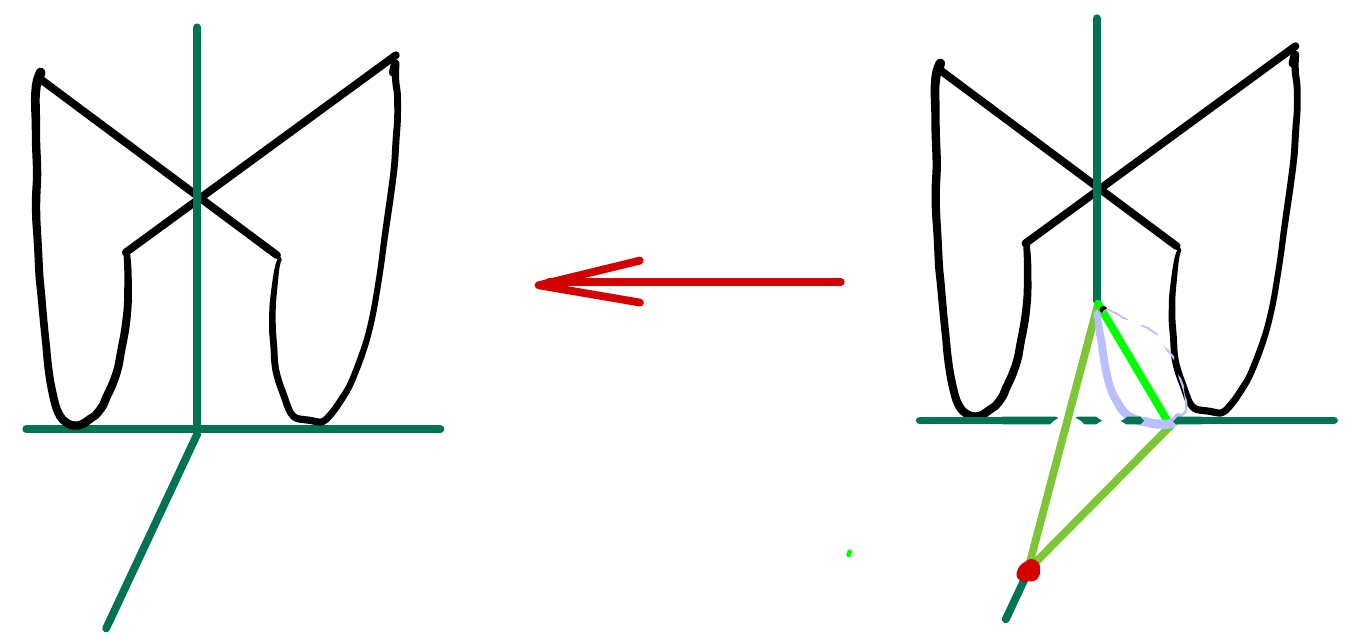}

\subsection{Exercise \ref{Ex:Newton}}\label{Sol:Newton}   \hfill

\hspace*{2in} \includegraphics[height=1.5in]{Newton.pdf}

\bibliographystyle{amsalpha}
\bibliography{principalization}

\end{document}